\documentclass[a4paper,10pt]{amsart}

%%Paquetes utilizados

\usepackage{amsrefs}
\usepackage{amssymb}
\usepackage{amsmath}
\usepackage{amsthm}
\usepackage{color}
\usepackage{enumerate}
\input arrow

%%Guiones

\hyphenation{ge-ne-ra-ted} \hyphenation{co-rres-pon-ding}
\hyphenation{su-ppo-se} \hyphenation{sub-mo-du-les}

%%Configuraci�n del documento

\pagestyle{headings}
\setlength{\parskip}{0.25cm}
\setlength{\harrowlength}{30pt}
\setlength{\varrowlength}{30pt}

%%Definci�n de funciones

\DeclareMathOperator{\im}{Im} %%Imagen n�cleo y Ext

%\DeclareMathOperator{\N0}{\mathbb{N}^*} %%N-{0}

 %%Conjunto del 1 al n

 %%Matriz 2 por 2

\newcommand{\modr}{\textrm{Mod-}R}
\newcommand{\rmod}{R\textrm{-Mod}}

\newcommand{\ext}{\textrm{Ext}_R}
\newcommand{\tor}{\textrm{Tor}^R}

\newcommand{\Coker}{\textrm{Coker }}
\newcommand{\filt}{\textrm{Filt-}}

%%Definici�n de comandos no matem�ticos

%%Definicion de entornos

%\newenvironment{proof}{\noindent \textbf{Proof.}}

%%Definici�n de teoremas

\newtheorem{definition}{Definition}[section]
\newtheorem{proposition}[definition]{Proposition}
\newtheorem{theorem}[definition]{Theorem}

\newtheorem{examples}[definition]{Examples}
\newtheorem{lemma}[definition]{Lemma}
\newtheorem{corollary}[definition]{Corollary}
\newtheorem{remark}[definition]{Remark}

%%Titulo

\title[PRODUCT OF FLATS AND MITTAG-LEFFLER DIMENSION]{PRODUCTS OF FLAT MODULES AND GLOBAL DIMENSION RELATIVE TO
  $\mathcal F$-MITTAG-LEFFLER MODULES}

\author{Manuel Cort\'es-Izurdiaga}
\address{Department of Mathematics, University of Almeria, E-04071,
  Almeria, Spain}
\thanks{Part of this paper was written while the author was visiting the
  School of Mathematics at the University of Manchester. The author is
  very grateful to Mike Prest for his hospitality and for many
  interesting discussions on the subject.\\
Partially supported by research project MTM-2014-54439 and by research
group ``Categor\'{\i}as, computaci\'on y teor\'{\i}a de anillos'' (FQM211) of
the University of Almer\'{\i}a} 
\email{mizurdia@ual.es}

\subjclass[2010]{16D40,16E10}
\keywords{Products of flat modules; global dimension; global dimension
  relative to Mittag-Leffler modules}

%%Comienzo de documento

\begin{document}

\maketitle

\begin{abstract}
  Let $R$ be any ring. We prove that all direct products of flat right
  $R$-modules have finite flat dimension if and only if each finitely
  generated left ideal of $R$ has finite projective dimension relative
  to the class of all $\mathcal F$-Mittag-Leffler left $R$-modules, where $\mathcal F$
  is the class of all flat right $R$-modules. In order to prove this theorem, we
  obtain a general result concerning global relative
  dimension. Namely, if $\mathcal X$ is any class of left
  $R$-modules closed under filtrations that contains all projective
  modules, then $R$ has finite left global projective dimension
  relative to $\mathcal X$ if and only if each left ideal of $R$ has
  finite projective dimension relative to $\mathcal X$. This result
  contains, as particular cases, the well known results concerning the
  classical left global, weak and Gorenstein global dimensions.
\end{abstract}

\section{INTRODUCTION}
\label{sec:introduction}

The motivation of this work comes from the study of right Gorenstein
regular rings.  A (non necessarily commutative with unit) ring $R$ is
said to be right Gorenstein regular if the category of right $R$-modules
is a Gorenstein category in the sense of \cite[Definition
2.18]{EnochsEstradaGarciaRozas}. These rings are precisely, by
\cite[Theorem 2.28]{EnochsEstradaGarciaRozas} and \cite[Theorem
1.1]{BennisMahdou}, those for which the right global Gorenstein
dimension is finite. Classical Iwanaga-Gorenstein rings, that is, two
sided noetherian rings with left and right self-injective dimensions
finite, are left and right regular Gorenstein. Actually, the right
Gorenstein regular property can be viewed as the natural one-sided
generalization of the Iwanaga-Gorenstein condition to non-noetherian
rings. Right Gorenstein regular rings have been studied in
\cite{EnochsEstradaGarciaRozas}, \cite{BeligiannisReiten},
\cite{EnochsIacobJenda}, \cite{EnochsEstradaIacob} and
\cite{CortesEnochsTorrecillas}.

In \cite[Corollary VII.2.6]{BeligiannisReiten} it is proved that a
ring $R$ is right Gorenstein regular if and only if the class of all
right $R$-modules with finite projective dimension coincides with the
class of all right $R$-modules with finite injective dimension. A direct consequence of this
fact is that the class of all modules with finite projective dimension
is closed under direct products. As it is deduced from our Theorem
\ref{t:ProdsOfFiniteFlats}, rings with this property satisfy that
direct products of right $R$-modules with finite flat dimension have
finite flat dimension. So, in order to understand right regular
Gorenstein rings it is necessary to study rings in which the class of
right $R$-modules with finite flat dimension is closed under direct products.

The main objective of this paper is to characterize rings for which
products of right modules with finite flat dimension have finite flat
dimension (we shall call them left weak coherent rings, since, by the
classical result \cite[Theorem 2.1]{chase}, they are natural
extensions of left coherent rings). This is essentially done in
Theorem \ref{t:ProdsOfFiniteFlats} where we characterize left weak
coherent rings as those which have finite left global dimension with
respect to the $\mathcal F$-Mittag-Leffler modules, where $\mathcal F$
is the class of all flat right $R$-modules (see Definition \ref{d:ML}
for details). At this point, a question arises naturally: is it
possible to obtain an intrinsic description of these rings? In order
to do this we study, in Section 3, the projective global dimension
relative to a class, $\mathcal X$, consisting of left $R$-modules that
contains all projective modules. The main result in this section,
Corollary \ref{c:GlobalDimensionIdeal}, states that (when $\mathcal X$
is closed under filtrations), the left global
projective dimension relative to $\mathcal X$ is finite if and only if
every left ideal has finite projective dimension relative of $\mathcal
X$. Moreover, if $\mathcal X$ is closed under direct limits (more
generally, under $(\aleph_0,\mathcal X)$-separable modules, see
Definition \ref{d:LocalFree} for details), the left global $\mathcal
X$-projective dimension is finite if and only if each finitely
generated left ideal has finite $\mathcal X$-projective
dimension. This result is interesting in its own because it contains,
as particular cases, well known theorems concerning the left global and weak
dimensions, see \cite[Theorem 1]{Auslander} and \cite[Theorem
8.25]{Rotman}, and the left Gorenstein global dimension, see
\cite[Proposition 3.5]{EnochsIacobJenda}.

This result about relative homological algebra allow us to improve the mentioned
characterization of left weak coherent rings, since, in order to compute the left global
dimension relative to the $\mathcal F$-Mittag-Leffler modules we only
have to look at finitely generated left ideals. Thus, in Theorem
\ref{t:ProdsOfFiniteFlats} we characterize left weak coherent rings
as those for which each finitely generated left ideal has finite
$\mathcal F$-Mittag-Leffler dimension. 

Let us point up that, as a byproduct of our
results, we obtain that left coherent rings are precisely those rings
with left $\mathcal F$-Mittag-Leffler global dimension 1: that is,
rings in which submodules of $\mathcal F$-Mittag-Leffler left
$R$-modules are again $\mathcal F$-Mittag-Leffler (see Corollary
\ref{c:coherent}). In addition, we prove that over left coherent rings
the class of all right $R$-modules with flat dimension less than or equal to $n$
(for some natural number $n$) is definable (see Corollary
\ref{c:Definable}).

\section{PRELIMINARIES}
\label{sec:preliminaries}

We fix, for the rest of the paper, a non necessarily commutative ring
with identity $R$. We shall work in the categories $\rmod$ and $\modr$
consisting of all left and right $R$-modules respectively. Morphisms
will operate on the opposite side than scalars. This implies that if
$f:M \rightarrow N$ and $g:N \rightarrow L$ are morphisms in $\rmod$,
then its composition will be $fg$. We shall denote by ${_R}{\mathcal
  F}$ the class of all flat left $R$-modules and by $\mathcal F_R$ the
class of all flat right $R$-modules; we shall omit the subscript when
there is no possible confusion. Given an infinite regular cardinal $\lambda$
and $M$ a left $R$-module, we shall say that $M$ is
$\lambda^<$-generated if it has a generating system with less than
$\lambda$ generators. Moreover, $M$ is said to be $\lambda^<$-presented
if there exists an exact sequence $R^{(\gamma)} \rightarrow R^{(\mu)}
\rightarrow M \rightarrow 0$ with $\mu$ and $\gamma$ cardinals smaller
than $\lambda$. If $\mathcal X$ is a class of left $R$-modules, we
shall denote by $\mathcal X^{< \lambda}$ the class of all
$\lambda^<$-presented modules belonging to $\mathcal X$. For any set $A$, $|A|$
will be its cardinality. We shall denote by $\omega$ the set of
natural numbers.

A direct system of left $R$-modules, $(M_i,\tau_{ij})_{i, j \in I}$, is
called well ordered if $I$ is an ordinal. Let $\kappa$ be an ordinal
and $(M_\alpha,\tau_{\alpha\beta})_{\alpha<\beta<\kappa}$ a well
ordered system of left $R$-modules. Following \cite[Definition
3.1.1]{GobelTrlifaj}, we shall say that the direct system is
continuous if for each limit ordinal $\beta < \kappa$ the direct limit
of the system $(M_\gamma,\tau_{\gamma\alpha})_{\gamma < \alpha <
  \beta}$ is the module $M_\beta$ with $\tau_{\alpha\beta}$ the
structural morphism from $M_\alpha$ to $M_\beta$ for each $\alpha <
\beta$. Let $\mathcal X$ be a class of left $R$-modules and suppose
that $\Coker \tau_{\alpha,\alpha+1} \in \mathcal X$ for each $\alpha+1
< \kappa$. If each $\tau_{\alpha \beta}$ is a monomorphism for every
pair $\alpha < \beta < \kappa$, we shall say that the direct system is
a $\mathcal X$-continuous direct system of monomorphisms. If,
moreover, each $\tau_{\alpha\beta}$ is an inclusion for every $\alpha
< \beta < \kappa$, we shall say that the direct system is a $\mathcal
X$-filtration. If $M$ is the direct limit of the filtration
$(M_{\alpha\beta},\tau_{\alpha\beta})_{\alpha < \beta < \kappa}$, we
say that $M$ is a $\mathcal X$-filtered module. Note that, in this
case, $M=\bigcup_{\alpha < \kappa}M_\alpha$ for a continuous chain of
submodules of $M$, $\{M_\alpha:\alpha < \kappa\}$, satisfying
$\frac{M_{\alpha+1}}{M_\alpha} \in \mathcal X$. We shall say that
$\mathcal X$ is closed under filtrations (resp. $\mathcal
X$-continuous limits of monomorphisms) if each $\mathcal X$-filtered
module (resp. each limit of a $\mathcal X$-continuous system of
monomorphisms) belongs to $\mathcal X$. It is easy to see that if $M$
is the limit of a $\mathcal X$-continuous system of monomorphisms then
it is a $\mathcal X$-filtered module. Then $\mathcal X$ is closed
under $\mathcal X$-continuous direct system of modules if and only if
it is closed under $\mathcal X$-filtrations.

The class $\mathcal X$ is called deconstructible (see \cite[Definition
1.4]{Stovicek}) if there exists a set of modules $\mathcal S$ of
$\mathcal X$ such that $\mathcal X$ is equal to
$\textrm{Filt-}\mathcal X$, the class of all $\mathcal S$-filtered
modules. As a consequence of \cite[Lemma 1.6]{Stovicek}, $\mathcal X$
is deconstructible if and only if it is closed under filtrations and
there exists a set $\mathcal S$ of $\mathcal X$ such that $\mathcal X
\subseteq \filt\mathcal S$.

Recall the notion of Mittag-Leffler with respect to a class of
modules, see \cite{Rothmaler} and \cite{AngeleriHerbera}.

\begin{definition}\label{d:ML}
  Let $\mathcal X$ be any class of right $R$-modules. A left
  $R$-module $M$ is said to be $\mathcal X$-Mittag-Leffler if for each
  family of right modules belonging to $\mathcal X$, $\{F_i:i \in
  I\}$, the canonical map from $\left(\prod_{i \in I}F_i\right)
  \otimes M$ to $\prod_{i \in I}F_i \otimes M$ is monic.
\end{definition}

For any class of right $R$-modules $\mathcal X$, we shall denote by
$\mathcal M^{\mathcal X}$ the class consisting of all $\mathcal
X$-Mittag-Leffler left $R$-modules. If $\mathcal X$ is equal to
$\modr$, then we get the classical definition of
Mittag-Leffler module, which was introduced by Raynaud and Gruson in
\cite{RaynaudGruson}.  We are specially interested in $\mathcal
F_R$-Mittag-Leffler left $R$-modules, which will be called Mittag-Leffler modules
with respect to the flat modules too. Of course, there are modules
which are Mittag-Leffler with respect to the flat modules but that are
not Mittag-Leffler. This follows from the facts that all left $R$-modules
are Mittag-Leffler if and only if $R$ is left pure-semisimple (see
\cite[Theorem 8]{AzumayaFacchini}), and that all left $R$-modules are
Mittag-Leffler with respect to the flats if and only if $R$ is left
noetherian (this follows easily from \cite[Theorem
1]{goodearl}). Then, if $R$ is left noetherian and not left
pure-semisimple, there exist modules in $\rmod$ which are
$\mathcal F$-Mittag-Leffler but not Mittag-Leffler.

By \cite[Theorem 2.6]{HerberaTrlifaj}, the Mittag-Leffler property can be
characterized in terms some local freenes. Let us recall what this
local property means:

\begin{definition}\label{d:LocalFree}
  Let $\mathcal X$ be a class of left $R$-modules and $\kappa$ an
  infinite regular cardinal. We shall say that a left $R$-module $M$ is
  $(\kappa,\mathcal X)$-free if it has a $(\kappa,\mathcal X)$-dense
  system of submodules, that is, a direct family of submodules,
  $\mathcal S$, such that:

  \begin{enumerate}
  \item $\mathcal S$ is closed under well-ordered ascending chains of
    length smaller than $\kappa$, and

  \item every subset of $M$ of cardinality smaller than $\kappa$ is
    contained in an element of $\mathcal S$.
  \end{enumerate}
\end{definition}

Given any class $\mathcal X$ of left $R$-modules, a module $M$ is
$(\aleph_0,\mathcal X)$-free if and only if it is the union of a
direct family of submodules belonging to $\mathcal X$. If $\mathcal
X$ is in $\modr$, it is proved in \cite[Theorem 2.6]{HerberaTrlifaj} that
the $\mathcal X$-Mittag-Leffler modules are precisely the
$(\aleph_1,\mathcal M^{\mathcal X})$-free modules.

Finally, let us fix some notation concerning the category of short
exact sequences of left $R$-modules, $\mathbf{Ses}(\rmod)$. Recall
that a morphism in this category is a triple $(f^1,f^2,f^3)$ making
the corresponding diagrams commutative. Given three classes, $\mathcal
X_1$, $\mathcal X_2$ and $\mathcal X_3$, of left $R$-modules, we shall
denote by $\textbf{Ses}(\mathcal X_1,\mathcal X_2,\mathcal X_3)$ the
full subcategory of $\textbf{Ses}(\rmod)$ consisting of all short
exact sequences with first term in $\mathcal X_1$, second term in
$\mathcal X_2$ and third term in $\mathcal X_3$. We shall use
continuous limits and filtrations in the category
$\textbf{Ses}(\rmod)$, which are defined in the same way as in the
category of $\rmod$.

\section{GLOBAL DIMENSION RELATIVE TO A CLASS}
\label{sec:glob-dimens-relat}

In this section we prove that the ring $R$ has finite left projective
global dimension relative to a class $\mathcal X$ which is closed
under filtrations and contains all projective modules if and only if
each left ideal of $R$ has finite projective dimension relative to
$\mathcal X$. As we mentioned in the introduction, this result is
crucial to characterize when products of right modules with finite flat
dimension have finite flat dimension.

\begin{definition}
  Let $\mathcal X$ be a class of left $R$-modules containing all
  projective modules.
  \begin{enumerate}
  \item Given a natural number $n$ and a left $R$-module $M$, we shall
    say that $M$ has projective dimension relative to $\mathcal X$ (or
    $\mathcal X$-projective dimension) less than or equal to $n$ if there
    exists projective resolution of $M$ such that its $(n-1)st$ syzygy
    belongs to $\mathcal X$. We shall denote by $\mathcal X_n$ the
    class of all modules with $\mathcal X$-projective dimension less than
    or equal to $n$.

  \item The left global projective dimension relative to $\mathcal X$
    (or left $\mathcal X$-projective global dimension) of $R$ is the
    supremum of the set consisting of the $\mathcal X$-projective
    dimensions of all left $R$-modules, if this supremum exists, and
    $\infty$ otherwise.
  \end{enumerate}
\end{definition}

Let $\mathcal X$ be any class of left $R$-modules that contains all
projective modules. It is very easy to prove that $\left(\mathcal
  X_n\right)_1 = \mathcal X_{n+1}$ for each natural number
$n$. Consequently, a left $R$-module $M$ has $\mathcal X$-projective dimension
less than or equal to $n+1$ if and only if there exists a short exact
sequence
\begin{displaymath}
  0 \mapright K \mapright P \mapright M \mapright 0
\end{displaymath}
with $P$ projective and $K$ with $\mathcal X$-projective dimension
less than or equal to $n$. If, in addition, $\mathcal X$ is closed under
direct summands and finite direct sums, the $\mathcal X$-projective
dimension does not depend on the chosen projective resolution, since,
for each natural number $n$, any two $n$-sysygies of a module are
projectively equivalent by \cite[Proposition 8.5]{Rotman}. Finally, if
$\mathcal X$ is the left hand class of a hereditary cotorsion pair
cogenerated by a set, then, by \cite[Proposition
1.11]{EstradaGuilIzurdiaga}, the $\mathcal X$-resolution dimension can
be computed using exact sequences with terms in $\mathcal X$ or left
$\mathcal X$-resolutions in the sense of \cite[Definition
8.1.2]{EnochsJenda}. Moreover, in this case, the $\mathcal
X$-projective dimension of a module $M$ is the least natural number
$n$ such that $\ext^n(M,C) = 0$ for each $C \in \mathcal X^\perp$.

Let $\mathcal X$ be any class of modules. In the following result we establish that some closure properties of $\mathcal X$ are
inherited by $\mathcal X_n$. These properties will be useful to
compute global dimensions.

\begin{proposition}\label{p:ClosedUnderFiltrations}
  Let $\mathcal X$ be a class of left $R$-modules that contains all
  projective modules. Then
  \begin{enumerate}
  \item If $\mathcal X$ is closed under filtrations, then $\mathcal
    X_n$ is closed under filtrations for each $n \in \omega$.

  \item If $\mathcal X$ is deconstructible, then $\mathcal X_n$ is
    deconstructible for each $n \in \omega$.

  \item Suppose that $\mathcal X$ is closed under direct summands and
    finite direct sums. Let $\kappa$ be any infinite regular cardinal such
    that each $(\kappa, \mathcal X)$-free module belongs to $\mathcal
    X$. Then, each $(\kappa,\mathcal X_n)$-free module belongs to
    $\mathcal X_n$ for every $n \in \omega$.
  \end{enumerate}
\end{proposition}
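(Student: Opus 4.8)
The three statements I would prove simultaneously by induction on $n$, using the identity $\mathcal X_{n+1} = (\mathcal X_n)_1$ recorded before the proposition. For $n = 0$ we have $\mathcal X_0 = \mathcal X$, so the base case is exactly the hypothesis. Since $\mathcal X_0 \subseteq \mathcal X_1 \subseteq \cdots$, every $\mathcal X_n$ contains all projective modules, so each $(\mathcal X_n)_1$ is well defined. Thus in all three cases it suffices to show that the property in question is inherited under the operation $\mathcal Y \mapsto \mathcal Y_1$, where $\mathcal Y = \mathcal X_n$ already has it; recall that $M \in \mathcal Y_1$ precisely when there is a short exact sequence $0 \to L \to P \to M \to 0$ with $P$ projective and $L \in \mathcal Y$.

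For (1) the plan is to show $\mathcal Y_1$ is closed under filtrations whenever $\mathcal Y$ is. Given a $\mathcal Y_1$-filtration $M = \bigcup_{\alpha < \kappa} M_\alpha$, I would lift it to a filtration in $\mathbf{Ses}(\rmod)$, building by transfinite recursion a continuous chain of short exact sequences $\mathbf E_\alpha \colon 0 \to L_\alpha \to P_\alpha \to M_\alpha \to 0$ with $P_\alpha$ projective and $L_\alpha \in \mathcal Y$. At a successor $\alpha+1$ I would take a presentation $0 \to L' \to Q' \to M_{\alpha+1}/M_\alpha \to 0$ with $Q'$ projective and $L' \in \mathcal Y$, lift $Q' \to M_{\alpha+1}/M_\alpha$ through $M_{\alpha+1}$ using projectivity of $Q'$, and set $P_{\alpha+1} = P_\alpha \oplus Q'$; the Horseshoe lemma then produces $L_{\alpha+1}$ with $L_{\alpha+1}/L_\alpha \cong L'$. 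At limits I would pass to the (exact) direct limit. The two points that make this work are that the middle chain has split inclusions with projective quotients, so every $P_\alpha$ is projective, while the first-coordinate chain is a $\mathcal Y$-filtration, whence $L_\kappa \in \mathcal Y$ by hypothesis; the sequence $0 \to L_\kappa \to P_\kappa \to M \to 0$ then witnesses $M \in \mathcal Y_1$.

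For (3) the auxiliary closure of $\mathcal X_n$ under direct summands and finite direct sums passes from $\mathcal X$ through projective equivalence of syzygies \cite[Proposition 8.5]{Rotman} (the $\mathcal X$-projective dimension of a summand, resp.\ of a finite sum, is bounded by that of the whole, resp.\ by the maximum), so the induction is legitimate. The core is to show a $(\kappa,\mathcal Y_1)$-free module $M$ lies in $\mathcal Y_1$. I would fix the canonical presentation $p\colon R^{(M)} \to M$ with kernel $L$, and for each member $N$ of a $(\kappa,\mathcal Y_1)$-dense system $\mathcal S$ put $P_N = R^{(N)} \subseteq R^{(M)}$ and $L_N = L \cap P_N$. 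Since $p|_{P_N}$ presents $N \in \mathcal Y_1$ with projective middle term, projective equivalence together with closure of $\mathcal Y$ under summands and finite sums forces $L_N \in \mathcal Y$. Then $\{L_N : N \in \mathcal S\}$ is a $(\kappa,\mathcal Y)$-dense system of $L$: directedness and closure under $<\kappa$-chains follow from $R^{(\bigcup N_\alpha)} = \bigcup R^{(N_\alpha)}$, and any $<\kappa$-subset of $L$ is captured because its elements have finite supports involving fewer than $\kappa$ basis vectors, which lie in some $N \in \mathcal S$. Hence $L$ is $(\kappa,\mathcal Y)$-free, so $L \in \mathcal Y$ and $M \in \mathcal Y_1$.

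For (2), part (1) already supplies closure of $\mathcal X_n$ under filtrations, so by \cite[Lemma 1.6]{Stovicek} it remains to produce a set $\mathcal S_n \subseteq \mathcal X_n$ with $\mathcal X_n \subseteq \filt \mathcal S_n$. Reducing once more to $\mathcal Y \mapsto \mathcal Y_1$ with $\mathcal Y$ deconstructible, I would fix a regular cardinal $\kappa$ exceeding $|R|$ and the cardinalities of a deconstructing set $\mathcal T$ of $\mathcal Y$, and let $\mathcal S_1$ be a set of representatives of the $<\kappa$-presented modules in $\mathcal Y_1$. For $M \in \mathcal Y_1$ with presentation $0 \to L \to R^{(X)} \to M \to 0$, $L \in \mathcal Y$, I would apply the Hill lemma (see \cite{GobelTrlifaj}) to a $\mathcal T$-filtration of $L$ to obtain a lattice $\mathcal H$ of submodules of $L$, and then run a simultaneous closure of $M$, of $R^{(X)}$, and of $\mathcal H$ to build a continuous chain of submodules of $M$ whose successive quotients are $<\kappa$-presented and lie in $\mathcal Y_1$ (their syzygies being quotients of members of $\mathcal H$, hence $\mathcal T$-filtered and in $\mathcal Y$), exhibiting $M$ as $\mathcal S_1$-filtered. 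The main obstacle is the transfinite lifting in (1) — keeping the middle chain projective at limit stages and confirming that the whole diagram is a genuine filtration in $\mathbf{Ses}(\rmod)$ — on which the other parts rest; the secondary difficulty is the bookkeeping of the simultaneous closure in (2) ensuring the successive quotients are at once $<\kappa$-presented and visibly in $\mathcal Y_1$.
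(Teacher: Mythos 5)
Your treatments of (1) and (2) track the paper's: (1) is the same Horseshoe-lemma transfinite lifting of the filtration to $\mathbf{Ses}(R\textrm{-Mod})$, with the same two observations (the middle terms stay projective because they are filtered by projectives, and the first coordinates form an $\mathcal X_n$-filtration); and for (2) you are essentially reproving the deconstruction step that the paper delegates to the proof of \cite[Theorem 2.2]{EstradaGuilIzurdiaga} (which only uses $\mathcal X \subseteq \textrm{Filt-}\mathcal X^{<\lambda}$ for some regular $\lambda$), so nothing is wrong there even though your Hill-lemma bookkeeping is only sketched.

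The gap is in (3), at the one point where real work is needed: closure of $\mathcal T=\{L_N : N\in\mathcal S\}$ under well-ordered ascending chains of length $<\kappa$. You justify it by the identity $R^{(\bigcup N_\alpha)}=\bigcup R^{(N_\alpha)}$, but that identity only helps once you know that an ascending chain $L_{N_0}\subseteq L_{N_1}\subseteq\cdots$ in $\mathcal T$ is induced by an ascending chain $N_0\subseteq N_1\subseteq\cdots$ in $\mathcal S$. A member of $\mathcal T$ is merely a submodule of $L$ that happens to equal $L_N$ for some $N$; an inclusion $L_N\subseteq L_{N'}$ does not obviously force $N\subseteq N'$, and without that you cannot choose the indices $N_\alpha$ to form a chain, so $\bigcup_\alpha N_\alpha$ need not lie in $\mathcal S$ ($\mathcal S$ is only assumed closed under well-ordered chains, not under arbitrary directed unions of fewer than $\kappa$ members). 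This order-reflection is precisely what the paper proves: if $K_S\leq K_T$ then $S\leq T$, via the observation that for $m\in S\setminus T$ and $r\in R\setminus\{1\}$ the relations $re_m-e_{rm}$ and $(1-r)e_m-e_{(1-r)m}$ lie in $K_S\subseteq K_T$, forcing $rm$ and $(1-r)m$ into $T$ and hence $m=rm+(1-r)m\in T$, a contradiction; moreover this argument needs $R$ to have more than two elements, which is why the paper disposes of the two-element ring separately (there $\mathcal X_n=\mathcal X$ trivially). You need to supply this step, or some substitute guaranteeing that chains in $\mathcal T$ lift to chains in $\mathcal S$, before you may conclude that $L$ is $(\kappa,\mathcal X_n)$-free.
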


\begin{proof}
  (1) We shall induct on $n$. Case $n=0$ is true by
  hypothesis. Suppose that we have proven the result for some natural
  number $n$ and let us prove it for $n+1$. Let $M$ be an $\mathcal
  X_{n+1}$-filtered module and let $\{M_\alpha:\alpha < \kappa\}$ be a
  $\mathcal X_{n+1}$-filtration of $M$ for some ordinal
  $\kappa$. Take, for each $\alpha < \kappa$, a short exact sequence
  \begin{displaymath}
    \mathbf{\overline S}_{\alpha+1}: \quad 0 \mapright \overline
    X_{\alpha+1} \mapright^{\overline f_{\alpha+1}} \overline P_{\alpha
      +1} \mapright^{\overline g_{\alpha+1}} \frac{M_{\alpha+1}}{M_\alpha} \mapright 0
  \end{displaymath}
  with $\overline P_{\alpha + 1}$ projective and $\overline
  X_{\alpha+1} \in \mathcal X_n$. We are going to construct a
  $\textbf{Ses}(\mathcal X_n,\mathcal P, \rmod)$-continuous direct
  system in $\textbf{Ses}(\rmod)$, $(\mathbf S_\alpha, \tau_{\alpha
    \beta})_{\alpha \leq \beta < \kappa}$, with
  \begin{displaymath}
    \mathbf{S}_\alpha: \quad    0 \mapright X_\alpha \mapright^{f_\alpha} P_\alpha \mapright^{g_\alpha} M_\alpha
    \mapright 0,
  \end{displaymath}
  satisfying:
  \begin{enumerate}[(i)]
  \item $\tau_{\alpha,\alpha+1}^1:X_\alpha \rightarrow X_{\alpha+1}$ is a monomorphism,
    $\tau_{\alpha,\alpha+1}^2:P_\alpha\rightarrow P_{\alpha+1}$ is a
    split monomorphism and $\tau_{\alpha,\alpha+1}^3:M_\alpha
    \rightarrow M_{\alpha+1}$ is the inclusion.

  \item For each $\beta < \kappa$ limit, $\displaystyle \mathbf
    S_\beta = \lim_{\substack{\longrightarrow\\\alpha < \beta}}\mathbf
    S_\alpha$.
  \end{enumerate}

  These $S_\alpha$ and $\tau_{\gamma \alpha}$, for each $\gamma <
  \alpha < \kappa$, can be constructed recursively. Let us sketch this
  construction. Case $\alpha = 0$ is trivial. Given any $\alpha$ such
  that $\mathbf S_\alpha$ and $\tau_{\gamma \alpha}$ have been already
  constructed for each $\gamma < \alpha$, we can construct, as in the
  proof of the Horseshoe lemma, \cite[Proposition 6.24]{Rotman}, a
  commutative diagram with exact rows and columns
  \begin{displaymath}
    \setlength{\harrowlength}{20pt}
    \setlength{\varrowlength}{20pt}
    \commdiag{& & 0 & & 0 & & 0 & & \cr
      & & \mapdown & & \mapdown & & \mapdown & & \cr
      0 & \mapright & X_\alpha & \mapright^{\tau^1_{\alpha\alpha+1}} & X_{\alpha+1} &
      \mapright & \overline X_{\alpha+1} & \mapright & 0\cr
      & & \mapdown & & \mapdown & & \mapdown & & \cr
      0 & \mapright & P_\alpha & \mapright^{\tau^2_{\alpha\alpha+1}} & P_\alpha \oplus \overline P_{\alpha+1} &
      \mapright & \overline{P}_{\alpha+1} & \mapright & 0\cr
      & & \mapdown & & \mapdown & & \mapdown & & \cr
      0 & \mapright & M_\alpha & \mapright_{\tau^3_{\alpha\alpha+1}} & M_{\alpha+1} &
      \mapright & \frac{M_{\alpha+1}}{M_\alpha} & \mapright & 0\cr
      & & \mapdown & & \mapdown & & \mapdown & & \cr
      & & 0 & & 0 & & 0 & & \cr}
  \end{displaymath}

  Then take $\mathcal S_{\alpha + 1}$ to be the middle column of the
  diagram with $P_{\alpha+1} = P_\alpha \oplus
  \overline{P}_{\alpha+1}$. If $\alpha$ is a limit ordinal and we have
  already constructed the sequence $\mathbf S_\gamma$ for each $\gamma
  < \alpha$, then take $\mathbf S_\alpha$ to be the direct limit of
  the system $(\mathbf S_\gamma,\tau_{\gamma \beta})_{\gamma \leq
    \beta < \alpha}$. This finishes the construction.

  Now let
  \begin{displaymath}
    \mathbf{ S}: \quad 0 \mapright 
    X \mapright P \mapright N \mapright 0
  \end{displaymath}
  be the direct limit of the system $(\mathbf
  S_\alpha,\tau_{\alpha\beta})_{\alpha < \beta < \kappa}$. Since
  direct limits in the category of short exact sequences are computed
  componentwise, we get that $N = M$. Moreover, we conclude that $X$
  is a $\mathcal X_n$-filtered module and, by induction hypothesis, it
  belongs to $\mathcal X_{n}$; and that $P$, being filtered by
  projective modules, is projective. Then the short exact sequence
  $\mathbf S$ says that $M \in \mathcal X_{n+1}$ and the proof is
  finished.

  (2) If $\mathcal X$ is deconstructible then $\mathcal X_n$ is closed
  under $\mathcal X_n$-filtrations by (1). Moreover, if there exists a
  set $\mathcal S$ such that each module in $\mathcal X$ is $\mathcal
  S$-filtered then, actually there exists an infinite regular cardinal
  number $\lambda$ such that $\mathcal X \subseteq \filt \mathcal X^{<
    \lambda}$. Then, the same proof of \cite[Theorem
  2.2]{EstradaGuilIzurdiaga} gives that $\mathcal X_n \subseteq \filt
  \mathcal X_n^{< \lambda}$ (the mentioned result is proved when
  $\mathcal X$ is the left hand class of a hereditary cotorsion pair
  cogenerated by a set; but in the proof it is only used that
  $\mathcal X \subseteq \filt \mathcal X^{< \lambda}$ for some
  infinite regular cardinal).

  (3) In some part of the following proof we have to assume that $R$
  has more than two elements. Note that if $R$ has two elements there
  is nothing to prove as $\mathcal X_n = \mathcal X$ for each $n \in
  \omega$.

  We shall induct on $n$. Case $n=0$ is the hypothesis. Suppose that
  the result is true for any natural number $n$ and let us prove it
  for $n+1$. Let $M$ be any $(\kappa,\mathcal X_{n+1})$-free module
  and fix $\mathcal S$ a $(\kappa,\mathcal X_{n+1})$-dense system of
  $M$. For each $m \in M$ let $\varphi_m:R \rightarrow Rm$ be the
  morphism given by multiplication; for each $T \leq M$, denote by
  $\varphi_T:R^{(T)} \rightarrow M$ the induced morphism and by $K_T$
  its kernel. Let us denote by $e_m$ the canonical element of
  $R^{(M)}$ for each $m \in M$. We claim that $\mathcal T = \{K_S:S
  \in \mathcal S\}$ is a $(\kappa,\mathcal X)$-dense system of
  $K_M$. First of all note that, since the $\mathcal X$-dimension does not
  depend on the chosen projective resolution (as $\mathcal X$ is
  closed under direct summands and finite direct sums), $K_{
    S} \in \mathcal X_n$ for each $ S \in \mathcal
  S$. Moreover, $\mathcal T$ is trivially a direct system and each $X
  \leq K_M$ with $|X| < \kappa$ is contained in some $K_S$ for some $S
  \in \mathcal S$.

  It only remains to prove that $\mathcal T$ is closed under
  well-ordered ascending chains of length smaller than $\kappa$. First
  of all, let us prove that if $K_S \leq K_T$ for some $S,T \in
  \mathcal S$, then $S \leq T$. Let $m \in S$ and suppose that $m
  \notin T$ and take $r \in R-\{1\}$. Since $re_m-e_{rm}$ and
  $(1-r)e_m-e_{(1-r)m}$ are nonzero element in $K_S \leq K_T$ and $m
  \notin T$, we conclude that both $rm$ and $(1-r)m$ belong to $T$
  (note that, if $L \in \mathcal S$ and $l,t \notin L$ then $K_L \cap
  (R_l\oplus R_t) = 0$, $R_l$ and $R_t$ being the copies of $R$ in
  coordinates $l$ and $s$ respectively). Then $m = rm+(1-r)m$ belongs
  to $T$, a contradiction. Now, using this fact, each well ordered chain
  $\{K_{S_\alpha}:\alpha < \mu\}$ of modules in $\mathcal T$ (with
  $\mu < \kappa$), gives, taking unions, the short exact sequence
  \begin{displaymath}
    0 \mapright \bigcup_{\alpha < \mu} K_{S_\alpha} \mapright R^{(S)}
    \mapright S \mapright 0
  \end{displaymath}
  where $S = \bigcup_{\alpha < \mu}S_\alpha$. Then, since
  $\bigcup_{\alpha < \mu}K_{S_\alpha} = K_S$ and $S \in \mathcal S$,
  we conclude that $\bigcup_{\alpha < \mu}K_{S_\alpha} \in \mathcal
  T$. Consequently, $\mathcal T$ is closed under well ordered unions.

  The conclusion is that $K_M$ is $(\kappa,\mathcal X_n)$-free and by
  induction hypothesis, $K_M \in \mathcal X_n$.  Consequently, $M \in
  \mathcal X_{n+1}$ and the proof is finished.
\end{proof}

\begin{remark}
  (1) of Proposition 3.2 extends \cite[Lemma
  2.1]{EstradaGuilIzurdiaga}, where it is proved that $\mathcal X_n$
  is closed under filtrations when $\mathcal X$ is the left hand class
  of a hereditary cotorsion pair cogenerated by a set.
\end{remark}

The preceding proposition contains well known results concerning the projective,
flat and Gorenstein projective dimensions. Moreover, we can extend the
characterization of Mittag-Leffler modules given in \cite[Theorem
2.6]{HerberaTrlifaj} to modules with finite dimension with respect to
Mittag-Leffler modules.

\begin{corollary}
  Let $\mathcal X$ be any class of right $R$-modules and $M$ a
  module. The following assertions are equivalent:
  \begin{enumerate}
  \item $M$ has $\mathcal M^{\mathcal X}$-Mittag-Leffler dimension
    less than or equal to $n$.

  \item $M$ is $(\aleph_1,\mathcal M^{\mathcal X}_n)$-free.
  \end{enumerate}
\end{corollary}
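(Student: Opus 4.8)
The plan is to prove the equivalence by reducing the statement about $\mathcal{M}^{\mathcal{X}}$-Mittag-Leffler dimension to the local-freeness characterization already available to us. Recall from the preliminaries that, by \cite[Theorem 2.6]{HerberaTrlifaj}, a module is $\mathcal{X}$-Mittag-Leffler (i.e. belongs to $\mathcal{M}^{\mathcal{X}}$) if and only if it is $(\aleph_1,\mathcal{M}^{\mathcal{X}})$-free. Since $\mathcal{M}^{\mathcal{X}}$-Mittag-Leffler dimension $\le n$ means precisely that $M \in (\mathcal{M}^{\mathcal{X}})_n$ in the notation of the previous definition, the content of the corollary is exactly the identity $(\mathcal{M}^{\mathcal{X}})_n = \{\,M : M \text{ is } (\aleph_1,(\mathcal{M}^{\mathcal{X}})_n)\text{-free}\,\}$. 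I would therefore set $\mathcal{Y} = \mathcal{M}^{\mathcal{X}}$ and try to apply part (3) of Proposition \ref{p:ClosedUnderFiltrations} with the cardinal $\kappa = \aleph_1$.

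To invoke part (3) of Proposition \ref{p:ClosedUnderFiltrations} I must verify its hypotheses for $\mathcal{Y} = \mathcal{M}^{\mathcal{X}}$: namely that $\mathcal{Y}$ contains all projective modules, is closed under direct summands and finite direct sums, and that every $(\aleph_1,\mathcal{Y})$-free module already lies in $\mathcal{Y}$. The last of these is the crux and is furnished directly by \cite[Theorem 2.6]{HerberaTrlifaj}, which identifies the $\mathcal{X}$-Mittag-Leffler modules with the $(\aleph_1,\mathcal{M}^{\mathcal{X}})$-free modules; this is the $n=0$ base case. The closure of $\mathcal{M}^{\mathcal{X}}$ under direct summands and finite direct sums is a standard and routine consequence of the definition via the canonical tensor maps (a finite direct sum commutes with the tensor products and products in Definition \ref{d:ML}, and a summand of a module with monic canonical map again has monic canonical map), and projectives are Mittag-Leffler with respect to any class since they are even Mittag-Leffler in the classical sense.

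Granting these hypotheses, Proposition \ref{p:ClosedUnderFiltrations}(3) yields, for every $n \in \omega$, that each $(\aleph_1,\mathcal{Y}_n)$-free module belongs to $\mathcal{Y}_n$; this gives the implication (2)$\Rightarrow$(1), i.e. that a $(\aleph_1,(\mathcal{M}^{\mathcal{X}})_n)$-free module has $\mathcal{M}^{\mathcal{X}}$-Mittag-Leffler dimension $\le n$. For the reverse implication (1)$\Rightarrow$(2) I would argue that any module in $\mathcal{Y}_n$ is trivially $(\aleph_1,\mathcal{Y}_n)$-free: the whole module $M$, viewed as a one-element (hence ascending-chain-closed) dense system of itself, satisfies both conditions of Definition \ref{d:LocalFree} provided $M$ itself belongs to $\mathcal{Y}_n$, since any subset of $M$ is contained in $M$. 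Thus (1) and (2) are equivalent.

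The step I expect to be the main obstacle is the careful bookkeeping of cardinalities in the reverse direction rather than any conceptual difficulty: one must check that when we pass from a module of $\mathcal{M}^{\mathcal{X}}$-dimension $\le n$ to a $(\aleph_1,(\mathcal{M}^{\mathcal{X}})_n)$-dense system, the trivial system $\{M\}$ genuinely satisfies the closure and containment axioms without hidden constraints—this is immediate here, so the real work is entirely absorbed into verifying the hypotheses of Proposition \ref{p:ClosedUnderFiltrations}(3), and in particular into the appeal to \cite[Theorem 2.6]{HerberaTrlifaj} for the base case. Once that single external input is in place, the corollary follows formally by the induction already carried out in the proposition.
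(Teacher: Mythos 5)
Your proposal is correct and follows exactly the route the paper intends (the corollary is stated without proof, but the surrounding text makes clear it is an application of Proposition \ref{p:ClosedUnderFiltrations}(3) with $\kappa=\aleph_1$ together with \cite[Theorem 2.6]{HerberaTrlifaj} supplying the $n=0$ case). Your verification of the hypotheses of part (3) and the trivial dense system $\{M\}$ for the converse direction are both fine.
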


Now we can prove the main theorem of this section:

\begin{theorem}
  Let $\lambda$ be an infinite regular cardinal, $n$, a nonzero
  natural number and $\mathcal X$ a class of left $R$-modules closed under
  filtrations that contains all projective modules. The following
  assertions are equivalent:
  \begin{enumerate}
  \item Each $\lambda$-presented module has $\mathcal X$-projective
    dimension less than or equal to $n$.

  \item Each $\lambda^<$-generated left ideal of $R$ has $\mathcal
    X$-projective dimension less than or equal to $n-1$.
  \end{enumerate}
\end{theorem}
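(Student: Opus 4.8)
The plan is to reduce the general statement to the case $n=1$ and then attack that case by a filtration argument. For the reduction I would apply the (yet to be proved) case $n=1$ with the class $\mathcal Y = \mathcal X_{n-1}$ in place of $\mathcal X$. By Proposition \ref{p:ClosedUnderFiltrations}(1) the class $\mathcal X_{n-1}$ is again closed under filtrations, it obviously contains all projectives, and by the identity $(\mathcal X_{n-1})_1 = \mathcal X_n$ recorded before that proposition one has $\mathcal Y = \mathcal Y_0 = \mathcal X_{n-1}$ and $\mathcal Y_1 = \mathcal X_n$. Hence assertion (1) for $(\mathcal X,n)$ is literally assertion (1) for $(\mathcal Y,1)$, and likewise for (2); so the equivalence for $(\mathcal Y,1)$ yields the one for $(\mathcal X,n)$. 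This lets me assume $n=1$, where (1) reads ``every $\lambda^<$-presented module lies in $\mathcal X_1$'' and (2) reads ``every $\lambda^<$-generated left ideal lies in $\mathcal X=\mathcal X_0$''.

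For $(1)\Rightarrow(2)$ I would take a $\lambda^<$-generated left ideal $I$, note that $R/I$ is $\lambda^<$-presented, and apply (1) to obtain a projective presentation $0\to L\to P\to R/I\to 0$ with $L\in\mathcal X$. Comparing it via Schanuel's lemma \cite[Proposition 8.5]{Rotman} with the canonical presentation $0\to I\to R\to R/I\to 0$ gives $I\oplus P\cong L\oplus R$; since $\mathcal X$ is closed under finite direct sums (a two-step filtration, hence a consequence of closure under filtrations) and contains $R$ and $L$, the right-hand side lies in $\mathcal X$, whence $I\in\mathcal X$.

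The substance is $(2)\Rightarrow(1)$. Given a $\lambda^<$-presented $M$, I would fix a presentation $0\to K\to F\to M\to 0$ with $F=R^{(\mu)}$, $\mu<\lambda$, and $K$ $\lambda^<$-generated, so that $M\in\mathcal X_1$ follows once $K\in\mathcal X$. I would exhibit $K$ as a filtered module whose consecutive quotients are isomorphic to $\lambda^<$-generated left ideals: each such quotient then lies in $\mathcal X$ by (2), and $K\in\mathcal X$ by closure under filtrations. The backbone is the coordinate filtration $K_\beta=K\cap F_\beta$, with $F_\beta=\bigoplus_{\gamma<\beta}Re_\gamma$, which is continuous and has each $K_{\beta+1}/K_\beta$ isomorphic to the left ideal $\pi_\beta(K\cap F_{\beta+1})\le R$.

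The hard part is that these coordinate-intersection ideals need not themselves be $\lambda^<$-generated, even when $K$ is: the intersection of a $\lambda^<$-generated submodule with a direct summand of $F$ may require more generators. So the real work is to refine this filtration, using the regularity of $\lambda$, into one whose subquotients are genuinely $\lambda^<$-generated left ideals. I would build it by a transfinite closing-off construction, producing a continuous chain of $\lambda^<$-sized data (a set of coordinates together with a $\lambda^<$-generated set of elements of $K$) that is closed under the operations yielding the relevant colon and syzygy information, so that at successor steps only a $\lambda^<$-generated ideal is adjoined; regularity keeps unions of fewer than $\lambda$ such pieces of size $<\lambda$, and continuity makes the chain a filtration. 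As in the proof of Proposition \ref{p:ClosedUnderFiltrations}(1), I would keep the bookkeeping in the category $\textbf{Ses}(\rmod)$, transporting a filtration of $M$ to a compatible one of the syzygy $K$ by the Horseshoe lemma \cite[Proposition 6.24]{Rotman}. I expect the delicate point to be exactly this cardinality control on the ideal subquotients; once it is secured, closure of $\mathcal X$ under filtrations completes the proof.
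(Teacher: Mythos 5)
Your architecture coincides with the paper's: both directions reduce to showing that the $\lambda^<$-generated syzygy $K$ of a presentation $0\to K\to R^{(\mu)}\to M\to 0$ (with $\mu<\lambda$) lies in $\mathcal X_{n-1}$, by filtering $K$ along the coordinates, identifying the subquotients $K_{\alpha+1}/K_\alpha$ (where $K_\alpha=K\cap R^{(\alpha)}$) with left ideals via the coordinate projections, and invoking Proposition \ref{p:ClosedUnderFiltrations}(1). Your reduction to $n=1$ via $\mathcal Y=\mathcal X_{n-1}$ and the Schanuel-lemma form of $(1)\Rightarrow(2)$ are harmless repackagings of what the paper does inline (though note that both your version and the paper's tacitly use that $\mathcal X_{n-1}$ is closed under direct summands when passing from $I\oplus P\cong L\oplus R$ to $I\in\mathcal X_{n-1}$; closure under filtrations yields finite direct sums but not summands).

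The genuine gap is the one you yourself flag and then defer: you never prove that the filtration of $K$ can be arranged so that its subquotients are $\lambda^<$-\emph{generated} left ideals, and without that, hypothesis (2) cannot be applied to them. Your suspicion that this is the delicate point is well founded, and you should be aware that the paper's own treatment of it does not survive scrutiny: it asserts that $\{(x_\delta)p_\gamma i_\gamma:\delta<\beta,\ \gamma<\alpha+1\}$ generates $K_{\alpha+1}$, but the coordinate components of elements of $K$ need not lie in $K$ at all. For instance, if $K=R(1,b)\le R^2$ with single generator $x_0=(1,b)$, then $K_1=K\cap(R\oplus 0)\cong\mathrm{ann}(b)$, the left annihilator of $b$, which is a left ideal that need not be $\lambda^<$-generated, whereas the claimed generating set $\{(1,0)\}$ is not even contained in $K_1$ unless $b=0$. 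So $\im q_\alpha$ really can fail to be $\lambda^<$-generated, and the ``transfinite closing-off construction'' you sketch is not a routine refinement but the actual mathematical content still to be supplied; it is not obvious to me that it can be supplied at this level of generality. (The paper's downstream results, Corollaries \ref{c:GlobalDimensionIdeal} and \ref{c:Mittag-LefflerDim}, can be rescued by running the filtration argument without any cardinality restriction and then invoking part (3) of Proposition \ref{p:ClosedUnderFiltrations}, where this issue does not arise.) As it stands your proposal is therefore incomplete at its crucial step --- but it is the same step at which the paper's own proof is deficient, and you deserve credit for locating it.
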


\begin{proof}
  (1) $\Rightarrow$ (2) If $I$ is a left ideal of $R$ which is
  $\lambda^<$-generated, the module $\frac{R}{I}$ is
  $\lambda^<$-presented and, by hypothesis, has $\mathcal X$-resolution
  dimension less than or equal to $n$. This means that $I$ has $\mathcal
  X$-resolution dimension less than or equal to $n-1$.

  (2) $\Rightarrow$ (1). Let $M$ be any $\lambda^<$-presented module,
  choose $\mu$ a cardinal smaller than $\lambda$ such that there
  exists an epimophism $f:R^{(\mu)} \rightarrow M$ and that, its
  kernel $K$ is $\lambda^<$-generated. We claim that $K$ has $\mathcal
  X$-resolution dimension less than or equal to $n-1$.

  In order to prove the claim, denote by $p_\alpha:R^{(\mu)}
  \rightarrow R$ and $i_\alpha:R \rightarrow R^{(\mu)}$ the
  projection and inclusion for each $\alpha < \mu$, and
  by $K_\alpha = K \cap R^{(\alpha)}$. Note that the family
  $\{K_\alpha:\alpha < \kappa\}$ is a filtration of $K$. Now, for each
  $\alpha < \kappa$, we have the exact sequence
  \begin{displaymath}
    0 \mapright K_\alpha \mapright K_{\alpha+1}
    \mapright^{q_{\alpha}} R
  \end{displaymath}
  where $q_{\alpha}$ is the restriction of $p_\alpha$ to
  $K_{\alpha+1}$. Note that $\im q_{\alpha}$ is $\lambda^<$-generated: if
  $\{x_\delta:\delta < \beta\}$ is a generating set of $K$ with
  $\beta$ a cardinal smaller than $\lambda$, then
  $\{(x_\delta)p_\gamma i_\gamma:\delta < \beta, \gamma < \mu\}$ is a
  generating set of $K$, from which follows that $\{(x_\delta)p_\gamma
  i_\gamma:\delta < \beta, \gamma < \alpha+1\}$ is a generating set of
  $K_{\alpha+1}$ with cardinality smaller than $\lambda$. The
  conclusion is that $\frac{K_{\alpha+1}}{K_\alpha}$ is isomorphic to
  a $\lambda^<$-generated left ideal of $R$ and, by hypothesis, has
  $\mathcal X$-resolution dimension less than or equal to $n-1$. This
  means that $K$ is $\mathcal X_{n-1}$-filtered and by Proposition
  \ref{p:ClosedUnderFiltrations} belongs to $\mathcal X_{n-1}$. This
  proves our claim and finishes the proof of the theorem.
\end{proof}

Note that always exists an infinite regular cardinal $\lambda$ such
that $R$ is left $\lambda$-noetherian, in the sense that each left
ideal is $\lambda^<$-generated. Then, as an immediate consequence of
the previous result, if each $\lambda^<$-presented module has finite
$\mathcal X$-projective dimension, then $R$ has finite left global
$\mathcal X$-projective dimension. Moreover, if we do not establish any cardinal restriction, we obtain an intrinsic
description of rings with finite global projective relative dimension:

\begin{corollary}\label{c:GlobalDimensionIdeal}
  Let $n$ be a nonzero natural number and $\mathcal X$ a class of left
  $R$-modules closed under filtrations that contains all projective
  modules. The following assertions are equivalent:
  \begin{enumerate}
  \item $R$ has left global $\mathcal X$-projective dimension less than or
    equal to $n$.

  \item Each left ideal of $R$ has $\mathcal X$-projective dimension
    less than or equal to $n-1$.
  \end{enumerate}

  If, in addition, $\mathcal X$ is closed under direct summands,
  finite direct sums and each $(\kappa,\mathcal X)$-free module
  belongs to $\mathcal X$, for some infinite regular cardinal $\kappa$, then these
  conditions are equivalent to:

\begin{enumerate}
  \setcounter{enumi}{2}
\item Each $\kappa^<$-generated left ideal of $R$ has $\mathcal
  X$-projective dimension less than or equal to $n-1$.
\end{enumerate}

\begin{proof}
  (1) $\Leftrightarrow$ (2) follows from the preceding theorem. (2)
  $\Rightarrow$ (3) is clear. In order to prove (3) $\Rightarrow$ (2)
  simply note that, if $I$ is a left ideal, the hypothesis says that
  the set of all $\kappa^<$-generated left ideals contained in $I$ is
  a $(\kappa,\mathcal X_{n-1})$-dense system of $I$. By Proposition
  \ref{p:ClosedUnderFiltrations}, $I$ has $\mathcal X$-dimension less than
  or equal to $n-1$.
\end{proof}
\end{corollary}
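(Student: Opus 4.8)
The plan is to obtain the equivalence of (1) and (2) as a direct consequence of the preceding theorem, and to treat the supplementary equivalence with (3) by means of Proposition \ref{p:ClosedUnderFiltrations}(3).

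For (1) $\Rightarrow$ (2), I would argue that if $R$ has left global $\mathcal X$-projective dimension at most $n$, then in particular $R/I$ lies in $\mathcal X_n$ for every left ideal $I$; applying the identity $(\mathcal X_{n-1})_1 = \mathcal X_n$ to the short exact sequence $0 \rightarrow I \rightarrow R \rightarrow R/I \rightarrow 0$, with $R$ projective, forces $I \in \mathcal X_{n-1}$. For (2) $\Rightarrow$ (1) the point is that (2) makes condition (2) of the preceding theorem hold for every infinite regular cardinal $\lambda$ — every $\lambda^<$-generated left ideal is a left ideal, hence lies in $\mathcal X_{n-1}$ — so the theorem yields that every $\lambda^<$-presented module lies in $\mathcal X_n$. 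As each left $R$-module is $\lambda^<$-presented for $\lambda$ chosen large enough, every module has $\mathcal X$-projective dimension at most $n$, giving (1).

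For the second part, (2) $\Rightarrow$ (3) is immediate, since every $\kappa^<$-generated left ideal is a left ideal. The substance of the argument is (3) $\Rightarrow$ (2). I would fix a left ideal $I$ and consider the family $\mathcal S$ of all $\kappa^<$-generated left ideals contained in $I$, aiming to show it is a $(\kappa, \mathcal X_{n-1})$-dense system of $I$. Each member of $\mathcal S$ lies in $\mathcal X_{n-1}$ by (3); any subset of $I$ of cardinality smaller than $\kappa$ generates a member of $\mathcal S$; and $\mathcal S$ is closed under well-ordered ascending chains of length smaller than $\kappa$ because, by regularity of $\kappa$, a union of fewer than $\kappa$ sets each of cardinality smaller than $\kappa$ again has cardinality smaller than $\kappa$, so such a union is $\kappa^<$-generated. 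Hence $I$ is $(\kappa, \mathcal X_{n-1})$-free.

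The concluding step is to apply Proposition \ref{p:ClosedUnderFiltrations}(3): since $\mathcal X$ is closed under direct summands and finite direct sums and every $(\kappa, \mathcal X)$-free module belongs to $\mathcal X$, that proposition promotes the closure property to level $n-1$, so every $(\kappa, \mathcal X_{n-1})$-free module belongs to $\mathcal X_{n-1}$; in particular $I \in \mathcal X_{n-1}$. I expect the main obstacle to be the bookkeeping in verifying that $\mathcal S$ is closed under the relevant chains — precisely the place where regularity of $\kappa$ is indispensable — together with correctly invoking Proposition \ref{p:ClosedUnderFiltrations}(3) at level $n-1$ rather than at level $0$.
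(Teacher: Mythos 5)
Your proposal is correct and follows essentially the same route as the paper: the equivalence of (1) and (2) is obtained from the preceding theorem (with $\lambda$ taken large enough that every left ideal is $\lambda^<$-generated and every module is $\lambda^<$-presented), and (3) $\Rightarrow$ (2) is proved by exhibiting the $\kappa^<$-generated left ideals contained in $I$ as a $(\kappa,\mathcal X_{n-1})$-dense system and invoking Proposition \ref{p:ClosedUnderFiltrations}(3). Your added detail on why regularity of $\kappa$ gives closure under short well-ordered chains is exactly the verification the paper leaves implicit.
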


This result contains, as particular cases, the well known results
concerning the global, weak and Gorenstein global dimensions (see
\cite[Theorem 8.16]{Rotman}, \cite[Theorem 8.25]{Rotman} and
\cite[Proposition 3.5]{EnochsIacobJenda}). Moreover, it can be applied
to the $\mathcal F$-Mittag-Leffler dimension.

\begin{corollary}\label{c:Mittag-LefflerDim}
  Let $n$ be a nonzero natural number. Then $R$ has left $\mathcal
  M^{\mathcal F}$-projective global dimension less than or equal to $n$
  if and only if each finitely generated left ideal has $\mathcal
  M^{\mathcal F}$-projective dimension less than or equal to $n-1$.
\end{corollary}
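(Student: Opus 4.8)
The plan is to obtain Corollary \ref{c:Mittag-LefflerDim} as a direct application of Corollary \ref{c:GlobalDimensionIdeal} with the specific choice $\mathcal X = \mathcal M^{\mathcal F}$, the class of all $\mathcal F$-Mittag-Leffler left $R$-modules, and $\kappa = \aleph_0$. To do so I must verify that this class satisfies the four hypotheses demanded by Corollary \ref{c:GlobalDimensionIdeal}: it contains all projective modules, it is closed under filtrations, it is closed under direct summands and finite direct sums, and every $(\aleph_0,\mathcal M^{\mathcal F})$-free module again belongs to $\mathcal M^{\mathcal F}$. Once these are in place, the equivalence (1) $\Leftrightarrow$ (3) of Corollary \ref{c:GlobalDimensionIdeal} reads exactly as the asserted statement, because $\aleph_0^<$-generated means finitely generated.

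First I would dispatch the elementary closure properties. Every projective module is $\mathcal F$-Mittag-Leffler, indeed every flat Mittag-Leffler module in the classical sense is $\mathcal X$-Mittag-Leffler for any $\mathcal X$, and projectives are in particular (classically) Mittag-Leffler; more simply, the canonical map from Definition \ref{d:ML} is an isomorphism whenever $M$ is finitely generated projective and one passes to arbitrary projectives by the standard reduction, so all projectives lie in $\mathcal M^{\mathcal F}$. Closure under direct summands and finite (indeed arbitrary) direct sums of $\mathcal X$-Mittag-Leffler modules is immediate from the functoriality of the canonical tensor map in Definition \ref{d:ML}, since tensor product and the relevant products commute with direct sums in the appropriate way; these are routine verifications that I would only state.

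The substantive inputs are the two remaining closures, and here I would invoke the characterization recalled in the preliminaries. By \cite[Theorem 2.6]{HerberaTrlifaj}, the $\mathcal F$-Mittag-Leffler modules are precisely the $(\aleph_1,\mathcal M^{\mathcal F})$-free modules. Closure of $\mathcal M^{\mathcal F}$ under $(\aleph_0,\mathcal M^{\mathcal F})$-free modules then follows from a cardinal-arithmetic argument: a union of a direct family of $\mathcal M^{\mathcal F}$-submodules can be refined to an $(\aleph_1,\mathcal M^{\mathcal F})$-dense system, so the union is itself $(\aleph_1,\mathcal M^{\mathcal F})$-free and hence $\mathcal F$-Mittag-Leffler. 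Closure under filtrations likewise rests on the local-freeness description: a module filtered by $\mathcal F$-Mittag-Leffler modules admits a $(\aleph_1,\mathcal M^{\mathcal F})$-dense system built from the filtration, so it is again $(\aleph_1,\mathcal M^{\mathcal F})$-free.

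The hard part will be verifying precisely that $\mathcal M^{\mathcal F}$ is closed under filtrations and under $(\aleph_0,\mathcal M^{\mathcal F})$-free modules, since these are exactly the nontrivial hypotheses of Corollary \ref{c:GlobalDimensionIdeal} and they are not purely formal; they depend on the interplay between the local freeness of \cite[Theorem 2.6]{HerberaTrlifaj} and the way dense systems behave under unions and filtrations. Everything else is bookkeeping. With all four hypotheses confirmed, I would simply apply Corollary \ref{c:GlobalDimensionIdeal} with $\mathcal X = \mathcal M^{\mathcal F}$ and $\kappa = \aleph_0$ and read off the equivalence of (1) and (3) as the claim, noting that $\aleph_0^<$-generated is the same as finitely generated.
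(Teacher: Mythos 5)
Your overall reduction is exactly the paper's: apply Corollary \ref{c:GlobalDimensionIdeal} with $\mathcal X = \mathcal M^{\mathcal F}$ and $\kappa = \aleph_0$, after checking its hypotheses, and read off the equivalence of (1) and (3). The routine verifications (projectives lie in $\mathcal M^{\mathcal F}$, closure under direct summands and finite direct sums) are fine as sketched. The gap lies in the two closure properties you yourself single out as ``the hard part'': your proposed derivations from the $(\aleph_1,\mathcal M^{\mathcal F})$-freeness characterization of \cite[Theorem 2.6]{HerberaTrlifaj} do not go through. For closure under $(\aleph_0,\mathcal M^{\mathcal F})$-free modules, i.e.\ directed unions of $\mathcal F$-Mittag-Leffler submodules, you claim the directed family ``can be refined to an $(\aleph_1,\mathcal M^{\mathcal F})$-dense system''; but closing such a family under unions of countable ascending chains produces countable directed unions of its members, and whether \emph{those} are again $\mathcal F$-Mittag-Leffler is precisely the statement being proved --- the argument is circular. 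The correct input is the finitary, local characterization of the $\mathcal F$-Mittag-Leffler condition in terms of finitely generated submodules, which is \cite[Theorem 1]{goodearl}, the result the paper cites for exactly this point. For closure under filtrations, a dense system ``built from the filtration'' only helps if each term $M_\alpha$ of the filtration is itself known to lie in $\mathcal M^{\mathcal F}$; establishing that by transfinite induction requires, at successor stages, closure of $\mathcal M^{\mathcal F}$ under extensions, which is not a formal consequence of $(\aleph_1,\cdot)$-freeness and is nowhere addressed in your sketch. The paper obtains closure under filtrations from \cite[Proposition 1.9]{AngeleriHerbera}.

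Once these two facts are supplied --- by citation, as in the paper, or by a genuine proof using the local characterizations just mentioned --- the remainder of your argument coincides with the paper's proof.
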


\begin{proof}
  Follows from the preceding result using that the class $\mathcal
  M^{\mathcal F}$ is closed under filtrations by \cite[Proposition
  1.9]{AngeleriHerbera} and under $(\aleph_0,\mathcal M^{\mathcal
    F})$-free modules, that is, under direct unions
  of submodules, by \cite[Theorem 1]{goodearl}.
\end{proof}

Another application of our result is when the left global and weak
dimensions coincide (for example, when the ring is left noetherian or
left perfect). In this case, the left global dimension is the supremum
of the projective dimensions of the finitely generated left ideals:

\begin{corollary}
  Let $R$ be a ring such that the left global and weak dimensions
  coincide, and let $n$ be a nonzero natural number. Then $R$ has left global
  dimension less than or equal to $n$ if and only if each finitely
  generated left ideal has projective dimension less than or equal to $n-1$.
\end{corollary}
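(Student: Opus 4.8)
The plan is to deduce the corollary from Corollary~\ref{c:GlobalDimensionIdeal}, applied to two different classes, and to link the two applications through the standing hypothesis together with the elementary inequality $\fd M \le \pd M$. Using two classes is forced by an asymmetry: the class $\mathcal P$ of projective modules controls the global dimension, but it is not closed under direct unions of submodules (an $(\aleph_0,\mathcal P)$-free module such as $\mathbb Z[1/p]$ over $\mathbb Z$, a union of copies of $\mathbb Z$, need not be projective), so part (3) of Corollary~\ref{c:GlobalDimensionIdeal} — the part that permits one to look only at finitely generated ideals — is not available for $\mathcal P$. By contrast the class ${_R}\mathcal F$ of flat left modules controls the weak dimension and does enjoy that closure property, and it is through ${_R}\mathcal F$ that finitely generated ideals will suffice.

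For the forward implication I would argue directly. If $R$ has left global dimension at most $n$, then $\pd R/I \le n$ for every left ideal $I$; since $R$ is projective, the exact sequence $0 \to I \to R \to R/I \to 0$ realizes $I$ as a first syzygy of $R/I$, whence $\pd I \le n-1$. In particular every finitely generated left ideal has projective dimension at most $n-1$. (Formally, this is the implication (1) $\Rightarrow$ (2) of Corollary~\ref{c:GlobalDimensionIdeal} for the class $\mathcal P$, whose relative projective dimension is the ordinary projective dimension.)

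For the converse, assume that every finitely generated left ideal has projective dimension at most $n-1$. Because $\fd M \le \pd M$ always, every finitely generated left ideal then has flat dimension at most $n-1$. Now I would invoke Corollary~\ref{c:GlobalDimensionIdeal} with $\mathcal X = {_R}\mathcal F$ and $\kappa = \aleph_0$, so that $\kappa^<$-generated means finitely generated and the relative projective dimension is the ordinary flat dimension. The required hypotheses all hold for ${_R}\mathcal F$: it contains the projectives, is closed under direct summands and finite direct sums, is closed under filtrations, and every $(\aleph_0,\mathcal F)$-free module is flat, being a direct union of flat submodules and hence a direct limit of flat modules. Condition (3) of the corollary is therefore satisfied, so condition (1) holds and the left global ${_R}\mathcal F$-projective dimension of $R$ is at most $n$; this relative global dimension is precisely the left weak dimension of $R$. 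By the standing hypothesis that the left global and weak dimensions coincide, I conclude that the left global dimension of $R$ is at most $n$, completing the equivalence.

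The only genuinely delicate points are the verifications needed to feed ${_R}\mathcal F$ into part (3) of Corollary~\ref{c:GlobalDimensionIdeal} — chiefly the closure of flat modules under direct unions (direct limits of flats are flat) and under filtrations — together with the identifications of the relative dimensions with the classical ones ($\mathcal P$-projective dimension with $\pd$, and ${_R}\mathcal F$-projective dimension with $\fd$). Once these are in place, the argument is just the observation $\fd M \le \pd M$ combined with the hypothesis $\text{l.gl.dim}\,R = \text{w.dim}\,R$, so I do not anticipate any substantial difficulty beyond correctly matching the conventions.
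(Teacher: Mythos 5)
Your proof is correct and follows essentially the same route as the paper: both directions reduce to Corollary~\ref{c:GlobalDimensionIdeal} applied to the class of flat left modules (whose relative global dimension is the weak dimension, computable from finitely generated ideals), combined with the inequality $\fd I \le \pd I$ and the hypothesis that the global and weak dimensions agree. The paper's proof is just a terser version of the same argument, and your verification of the hypotheses for ${_R}\mathcal F$ in part (3) of that corollary is exactly what is implicitly being used there.
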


\begin{proof}
  Simply note that by hypothesis and Corollary
  \ref{c:GlobalDimensionIdeal}, the left global dimension is equal to
  the supremum of the flat dimensions of all finitely generated left
  ideals. But this supremum is smaller or equal than the supremum of
  the projective dimension of all finitely generated left ideals.
\end{proof}

Let us finish this section with an useful result for computing
global dimensions. It is is proven with the argument used in
\cite[Corollary VII.2.6]{BeligiannisReiten}.

\begin{lemma}\label{l:FiniteGlobalDimension}
  Let $\mathcal X$ and $\mathcal Y$ be classes of left $R$-modules
  such that $\mathcal X$ is closed under direct summands, finite
  direct sums and contains all projective modules, and $\mathcal Y$ is
  closed under countable direct sums or
  countable direct products. Then the following assertions are
  equivalent:
  \begin{enumerate}
  \item Each module in $\mathcal Y$ has finite $\mathcal X$-projective
    dimension.

  \item There exists a natural number $n$ such that each module in
    $\mathcal Y$ has $\mathcal X$-projective dimension less than or equal
    to $n$.
  \end{enumerate}
\end{lemma}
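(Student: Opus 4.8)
The plan is to prove $(2) \Rightarrow (1)$, which is immediate, and to establish $(1) \Rightarrow (2)$ by contradiction. Suppose $(1)$ holds but $(2)$ fails. Then for every $n \in \omega$ there is a module $M_n \in \mathcal Y$ that does not lie in $\mathcal X_n$, i.e.\ whose $\mathcal X$-projective dimension is strictly larger than $n$. I would gather these into a single module of $\mathcal Y$ by setting $M = \bigoplus_{n \in \omega} M_n$ when $\mathcal Y$ is closed under countable direct sums, and $M = \prod_{n \in \omega} M_n$ when it is closed under countable direct products; either way $M \in \mathcal Y$, so by $(1)$ we have $M \in \mathcal X_d$ for some $d \in \omega$.

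The heart of the argument is a monotonicity property: if $N$ is a direct summand of a module $L$ and $L \in \mathcal X_d$, then $N \in \mathcal X_d$. In both constructions each $M_n$ is a direct summand of $M$ --- for the product because the projection $M \to M_n$ splits via the canonical inclusion, and for the coproduct trivially --- so this property forces $M_n \in \mathcal X_d$ for every $n$. Taking $n = d$ then contradicts $M_d \notin \mathcal X_d$, and the proof is complete. To verify the monotonicity property, write $L = N \oplus N'$, fix projective resolutions of $N$ and of $N'$, and form their termwise direct sum, which is a projective resolution of $L$ whose $k$-th syzygy is $\Omega_k(N) \oplus \Omega_k(N')$. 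Since $\mathcal X$ is closed under direct summands and finite direct sums, the $\mathcal X$-projective dimension is independent of the chosen projective resolution (as observed after the definition preceding Proposition \ref{p:ClosedUnderFiltrations}), so $L \in \mathcal X_d$ yields $\Omega_{d-1}(N) \oplus \Omega_{d-1}(N') \in \mathcal X$, with the convention that the $(-1)$st syzygy of a module is the module itself in order to cover the case $d = 0$. Closure of $\mathcal X$ under direct summands then gives $\Omega_{d-1}(N) \in \mathcal X$, that is, $N \in \mathcal X_d$.

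The single nontrivial point, and hence the main obstacle, is this monotonicity step. The subtlety is that membership in $\mathcal X_d$ is defined by the existence of \emph{some} projective resolution with $(d-1)$st syzygy in $\mathcal X$, so one is not automatically entitled to read off that syzygy from the particular split resolution of $L$; this is exactly where the independence-of-resolution fact (and thus the closure of $\mathcal X$ under direct summands and finite direct sums) is needed. Once that is granted, the remainder is purely formal, and the same argument works verbatim in the direct-sum and the direct-product cases.
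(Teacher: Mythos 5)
Your proof is correct and follows essentially the same route as the paper: assemble modules of unbounded $\mathcal X$-projective dimension into a single member of $\mathcal Y$ via the countable sum (or product), bound its dimension by (1), and contradict via closure of $\mathcal X_d$ under direct summands. The only difference is cosmetic --- you spell out the summand-monotonicity step and the independence-of-resolution point that the paper leaves as a parenthetical remark.
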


\begin{proof}
  We only have to prove that (1) implies (2). We do it assuming that
  $\mathcal Y$ is closed under countable direct sums (the case of
  products is analogous). Suppose that (2) is false and take, for each
  natural number $n$, a module $Y_n$ in $\mathcal Y$ with $\mathcal
  X$-projective dimension equal to $n$. Then, $Y=\bigoplus_{n \in
    \omega}Y_n$ belongs to $\mathcal Y$ and has infinite $\mathcal
  X$-dimension since, otherwise, it would exist $m \in \omega$
  such that $Y$ would have $\mathcal X$-projective dimension less than or
  equal to $m$, which would imply that $Y_{m+1}$ would have this
  property too (note that if $\mathcal X$ is closed under direct
  summands, then so is $\mathcal X_m$). But the dimension of $Y_{m+1}$
  is $m+1$, which is a contradiction.
\end{proof}

\section{WHEN DIRECT PRODUCTS OF FLAT MODULES HAVE FINITE FLAT DIMENSION}

In this section we investigate rings for which direct products of
right $R$-modules with finite flat dimension have finite flat
dimension. As a consequence of our first result, we only have to look
at direct product of flat modules.

\begin{proposition}\label{p:BoundedDimensionOfProducts}
  The following assertions are equivalent:
  \begin{enumerate}
  \item The direct product of any family of flat right $R$-modules has
    finite flat dimension.

  \item There exists a natural number $n$ such that the product of any
    family of flat right $R$-modules has flat dimension less than or equal
    to $n$.

  \item There exists a natural number $m$ such that the direct product
    of any family of right $R$-modules with flat dimension less than or
    equal to $m$ has finite flat dimension.

\item There exist natural numbers $m$ and $n$ such that the direct
  product of any family of right $R$-modules with flat dimension less than
  or equal to $m$ has flat dimension less than or equal to $m+n$.

  \item For any natural number $m$, the direct product of any family
    of right $R$-modules with flat dimension less than or equal to $m$
    has finite flat dimension.

  \item For any natural number $m$ there exists a natural number $n$
    such that the direct product of any family of right $R$-modules
    with flat dimension less than or equal to $m$ has flat dimension
    less than
    or equal to $m+n$.
  \end{enumerate}
\end{proposition}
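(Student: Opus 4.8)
The plan is to close all six conditions into a single cycle of implications
$(1)\Rightarrow(2)\Rightarrow(6)\Rightarrow(5)\Rightarrow(3)\Rightarrow(1)$, together with the trivial links $(6)\Rightarrow(4)\Rightarrow(3)$ that incorporate condition (4). Most of these arrows require no work: a uniform bound trivially gives finiteness, so $(6)\Rightarrow(5)$, $(6)\Rightarrow(4)$ and $(4)\Rightarrow(3)$ are immediate; a statement holding ``for every $m$'' trivially gives the corresponding ``there exists $m$'' statement, so $(5)\Rightarrow(3)$ is immediate; and $(3)\Rightarrow(1)$ holds at once because flat modules have flat dimension $0\le m$, so that any product of flat modules is among the products considered in (3) and therefore has finite flat dimension. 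Thus only the two implications $(1)\Rightarrow(2)$ and $(2)\Rightarrow(6)$ carry genuine content.

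For $(1)\Rightarrow(2)$ I would apply Lemma \ref{l:FiniteGlobalDimension}, in its evident right-module form (equivalently, over the opposite ring $R^{op}$), taking $\mathcal X=\mathcal F_R$ the class of all flat right modules and $\mathcal Y$ the class of all direct products of flat right modules. Here $\mathcal X$ is closed under direct summands and finite direct sums and contains all projectives, and the $\mathcal X$-projective dimension coincides with the usual flat dimension, since $\mathcal X_0=\mathcal F_R$ is exactly the class of flat modules and the recursive characterisation via a short exact sequence $0\to K\to P\to M\to 0$ with $P$ projective is precisely the recursion defining flat dimension. The class $\mathcal Y$ is closed under countable direct products, because a countable product of products of flat modules is again a product of flat modules. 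Hence condition (1), which asserts that every member of $\mathcal Y$ has finite flat (that is, $\mathcal X$-projective) dimension, yields by the lemma a single $n$ bounding all those dimensions, which is exactly (2).

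For $(2)\Rightarrow(6)$ I would induct on $m$, carrying the same $n$ furnished by (2); the case $m=0$ is (2) itself. Given a family $\{M_i\}_{i\in I}$ with $\fd M_i\le m$ (so $m\ge 1$), choose for each $i$ a short exact sequence $0\to K_i\to P_i\to M_i\to 0$ with $P_i$ projective and $\fd K_i\le m-1$. Since direct products are exact in $\modr$, taking products gives an exact sequence $0\to\prod_i K_i\to\prod_i P_i\to\prod_i M_i\to 0$ in which $\prod_i P_i$ is a product of flat (indeed projective) modules, hence has flat dimension at most $n$ by (2), while $\prod_i K_i$ is a product of modules of flat dimension at most $m-1$, hence has flat dimension at most $(m-1)+n$ by the induction hypothesis. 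The standard inequality $\fd\prod_i M_i\le\max\{\fd\prod_i P_i,\,1+\fd\prod_i K_i\}$ then gives $\fd\prod_i M_i\le\max\{n,\,m+n\}=m+n$, which is (6). I expect no deep obstacle here; the only delicate points are bookkeeping, namely checking that the ambient classes $\mathcal Y$ are genuinely closed under countable products so that Lemma \ref{l:FiniteGlobalDimension} applies, and tracking the dimension shift carefully so that the single constant $n$ survives the entire induction.
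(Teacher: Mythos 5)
Your proof is correct, and its two substantive ingredients coincide with the paper's: Lemma \ref{l:FiniteGlobalDimension} applied to the class of all products of flat right modules (closed under countable products) to get (1)$\Leftrightarrow$(2), and an induction on $m$ in which one takes projective presentations $0\to K_i\to P_i\to M_i\to 0$ and forms the exact product sequence. The organization, however, is genuinely different. The paper runs the induction purely qualitatively, proving (1)$\Rightarrow$(5) with ``finite flat dimension'' as the inductive conclusion, and then invokes Lemma \ref{l:FiniteGlobalDimension} two further times, with $\mathcal Y$ the class of products of modules of flat dimension at most $m$, to convert the finiteness statements (3) and (5) into the bounded statements (4) and (6). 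You instead carry the explicit bound through the induction, proving (2)$\Rightarrow$(6) via $\fd\prod_i M_i\le\max\{\fd\prod_i P_i,\,1+\fd\prod_i K_i\}\le\max\{n,\,m+n\}=m+n$, so the uniform-bound versions (4) and (6) come for free and the compactness lemma is needed only once. What your route buys is a sharper quantitative conclusion: the single constant $n$ of condition (2) works uniformly for every $m$, whereas the paper's argument as written only yields, for each $m$, some bound produced by the lemma. What the paper's route buys is that each inductive step is slightly lighter (no bookkeeping of constants) at the cost of repeating the finite-versus-bounded reduction. Both arguments are complete; your chain $(1)\Rightarrow(2)\Rightarrow(6)\Rightarrow(5)\Rightarrow(3)\Rightarrow(1)$ together with $(6)\Rightarrow(4)\Rightarrow(3)$ does close all six conditions into one equivalence class.
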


\begin{proof}
  (1) $\Leftrightarrow$ (2). Follows from Lemma
  \ref{l:FiniteGlobalDimension} by taking $\mathcal Y$ the class of
  all products of flat right $R$-modules and $\mathcal X$ the class of
  all flat right $R$-modules.

  (3) $\Leftrightarrow$ (4) and (5) $\Leftrightarrow$ (6). Again
  follows from Lemma \ref{l:FiniteGlobalDimension} by taking $\mathcal
  Y$ the class consisting of all products of right $R$-modules with
  flat dimension less than or equal to $m$ and $\mathcal X$ the class of
  all flat right $R$-modules.

  (1) $\Rightarrow$ (5). By induction on $m$. Case $m=0$ is (1). Suppose that the result is true for some natural
  number $m$ and let us prove it for $m+1$. Let $\{F_i:i \in I\}$ be a
  family of right $R$-modules with flat dimension less than or equal to
  $m+1$. Then, for each $i \in I$ there exists a projective
  presentation
  \begin{displaymath}
    0 \mapright K_i \mapright P_i \mapright F_i \mapright 0
  \end{displaymath}
  with $K_i$ having flat dimension less than or equal to $n$. With all
  these sequences, we can form the short exact sequence
  \begin{displaymath}
    0 \mapright \prod_{i \in I}K_i \mapright \prod_{i \in I}P_i
    \mapright \prod_{i \in I}F_i \mapright 0
  \end{displaymath}
  Now apply the induction hypothesis to get that $\prod_{i \in I}P_i$
  and $\prod_{i \in I}K_i$ have finite flat dimension and,
  consequently, that $\prod_{i \in I}F_i$ has finite flat dimension
  too.

  (5) $\Rightarrow$ (3) $\Rightarrow$ (1). Trivial.
\end{proof}

We shall say that $R$ is a left weak coherent ring if it satisfies the
equivalent conditions of the preceding result. Suppose that $R$ is
such a ring. Then we can take $n$ the maximum of the set consisting of
all flat dimensions of all direct products of flat right
$R$-modules. In this case, we shall say that $R$ is left weak
$n$-coherent. Note that left weak $0$-coherent rings are precisely
left coherent rings. In the following result we characterize left weak
coherent rings in terms of the left global $\mathcal M^{\mathcal
  F}$-projective resolution.

\begin{theorem}\label{t:ProdsOfFiniteFlats}
  Let $n$ be a natural number. The following assertions are equivalent:
  \begin{enumerate}
  \item $R$ is left weak $n$-coherent.

  \item There exists a set $I_0$ with $|I_0| \geq |R|$ such that
    $R_R^{I_0}$ has flat dimension less than or equal to $n$.

  \item $R$ has left global $\mathcal M^{\mathcal F}$-projective
    dimension less than or equal to $n+1$.

  \item Each finitely generated left ideal of $R$ has $\mathcal M^{\mathcal
      F}$-projective dimension less than or equal to $n$.

  \item Each cyclic left $R$-modules has $\mathcal M^{\mathcal
      F}$-projective dimension less than or equal to $n+1$.
  \end{enumerate}
\end{theorem}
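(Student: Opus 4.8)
The plan is to make the five conditions equivalent by combining the machinery of Section 3 with one explicit $\tor$-computation. The purely homological links come for free: (3) $\Leftrightarrow$ (4) is Corollary \ref{c:Mittag-LefflerDim} read with $n+1$ in place of $n$; and for (4) $\Leftrightarrow$ (5) I would feed the short exact sequence $0 \to I \to R \to R/I \to 0$ into the identity $(\mathcal M^{\mathcal F}_n)_1 = \mathcal M^{\mathcal F}_{n+1}$, so that a cyclic module $R/I$ lies in $\mathcal M^{\mathcal F}_{n+1}$ exactly when $I \in \mathcal M^{\mathcal F}_n$, and then invoke Corollary \ref{c:GlobalDimensionIdeal} with $\kappa = \aleph_0$ to pass between arbitrary and finitely generated left ideals (the class $\mathcal M^{\mathcal F}$ has the required closure properties by the references used in Corollary \ref{c:Mittag-LefflerDim}). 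Also (1) $\Rightarrow$ (2) is trivial, since $R_R^{I_0}$ is a product of copies of the flat module $R_R$; and by Proposition \ref{p:BoundedDimensionOfProducts} condition (1) unwinds to: every product of flat right modules has flat dimension $\le n$. It remains to prove (1) $\Leftrightarrow$ (3) and (2) $\Rightarrow$ (4).

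The core is the equivalence (1) $\Leftrightarrow$ (3), which I would derive from a single identity. Write $\Omega^k M$ for the $k$-th syzygy of a left module $M$, with $\Omega^0 M = \ker(P_0 \to M)$, so that $R$ has left $\mathcal M^{\mathcal F}$-global dimension $\le n+1$ iff $\Omega^n M \in \mathcal M^{\mathcal F}$ for every $M$. Fix a family $\{G_i\}_{i\in I}$ of flat right modules, put $G = \prod_i G_i$, and let $\phi_N \colon G \otimes N \to \prod_i(G_i \otimes N)$ be the canonical map of Definition \ref{d:ML}. Two observations combine. First, since projectives are flat, dimension shifting gives $\tor_{n+1}(G,M) \cong \tor_1(G,\Omega^{n-1}M)$ for \emph{any} right module $G$. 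Second, for any $N$ with presentation $0 \to \Omega^0 N \to Q \to N \to 0$ with $Q$ free and inclusion $j$, the map $\phi_Q$ is injective because $Q$ is free, while flatness of each $G_i$ makes $\prod_i(1\otimes j)$ injective; the naturality square of $\phi$ along $j$ then forces
\begin{displaymath}
  \tor_1(G,N) = \ker(1_G \otimes j) = \ker \phi_{\Omega^0 N}.
\end{displaymath}
Taking $N = \Omega^{n-1}M$, so $\Omega^0 N = \Omega^n M$, yields the key identity $\tor_{n+1}(G,M) \cong \ker \phi_{\Omega^n M}$, valid for every $M$ and every flat family. Hence $\Omega^n M$ is $\mathcal F$-Mittag-Leffler precisely when $\tor_{n+1}(\prod_i G_i, M)=0$ for all flat families; quantifying over all $M$ turns ``$R$ has left $\mathcal M^{\mathcal F}$-global dimension $\le n+1$'' into ``$\fd \prod_i G_i \le n$ for every family of flats'', which is (1).

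For (2) $\Rightarrow$ (4) I would show that the $\mathcal M^{\mathcal F}$-dimension of a finitely generated left ideal $I$ can be certified by testing against the single power $R_R^{I_0}$ alone. The decisive point is a cardinality bound: resolving the finitely presented module $R/I$ by free modules of rank at most $|R|$, each syzygy $\Omega^k(R/I)$ embeds in such a free module and so is generated by at most $|R| \le |I_0|$ elements. By the identity above applied to $G = R_R^{I_0}$, the hypothesis $\fd R_R^{I_0} \le n$ says exactly that the canonical map $\phi_{\Omega^n(R/I)}$ for the family of $|I_0|$ copies of $R$ is injective. A Goodearl-type reduction in the spirit of \cite[Theorem 1]{goodearl} — legitimate because $|I_0| \ge |R|$ dominates the number of generators of $\Omega^n(R/I)$ — upgrades injectivity against this one family of copies of $R$ to injectivity against all families of flats, so $\Omega^n(R/I) \in \mathcal M^{\mathcal F}$ and $R/I \in \mathcal M^{\mathcal F}_{n+1}$, giving (4).

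The step I expect to be the main obstacle is precisely this last reduction: passing from vanishing of $\tor_{n+1}$ against one sufficiently large power of $R$ to the full $\mathcal F$-Mittag-Leffler property of the syzygy. It needs two genuinely non-formal inputs — reducing arbitrary families of flat modules to families of copies of $R$, and shrinking the index set to one of size $|I_0| \ge |R|$ — and this is the only place the cardinality hypothesis $|I_0| \ge |R|$ is essential; it is the Chase-style heart of the theorem, while everything else is either syzygy bookkeeping or a direct appeal to the results of Section 3.
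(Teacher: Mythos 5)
Your proposal is correct and follows essentially the same route as the paper: the same naturality square over a projective presentation (giving $\tor_1(G,N)\cong\ker\phi_{\Omega^0 N}$), the same dimension shift, the same appeal to Goodearl's Theorem 1 to pass between a single power $R^{I_0}$ with $|I_0|\geq |R|$ and arbitrary families of flat modules, and the same use of Corollary \ref{c:Mittag-LefflerDim} for (3) $\Leftrightarrow$ (4) $\Leftrightarrow$ (5). The only difference is organizational: you state the $\tor$-identity for arbitrary products of flats and prove (1) $\Leftrightarrow$ (3) and (2) $\Rightarrow$ (4) separately, whereas the paper folds everything into the single claim that the $\mathcal M^{\mathcal F}$-projective dimension of $M$ is at most $n$ if and only if $\tor_n(R^{I_0},M)=0$; your arrangement has the incidental merit of making the implication back to (1) explicit, which the paper's written chain of implications leaves to the reader.
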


\begin{proof}
  (1) $\Rightarrow$ (2). Trivial.

  (2) $\Leftrightarrow$ (3). Let $M$ be a left $R$-module. We claim
  that the $\mathcal M^{\mathcal F}$-projective dimension of $M$ is
  less than or equal to $n$ if and only if
  $\tor_{n}\left(R^{I_0},M\right)=0$. Then the equivalence of (2) and
  (3) follows from this claim.

  We proceed by induction on $n$. Fix a projective presentation of
  $M$,
  \begin{equation}\label{eq:a}
    0  \mapright  K \mapright  P  \mapright  M  \mapright  0
  \end{equation}
  In order to prove case $n=1$ consider the following commutative
  diagram with exact rows
  \begin{displaymath}
    \commdiag{ &  & R^{I_0} \otimes K &
      \mapright^f & R^{I_0} \otimes P &
      \mapright & R^{I_0} \otimes M &
      \mapright & 0\cr
      & & \mapdown\lft{g} & & \mapdown & & \mapdown & & \cr
      0 & \mapright & K^{I_0} & \mapright &
      P^{I_0} & \mapright & M^{I_0} & \mapright & 0}
  \end{displaymath}
  Since $\tor_1\left(R^{I_0},P\right)=0$, as $P$ is projective, we
  conclude that $\tor_1\left(R^{I_0},N\right)=0$ if and only if $f$ is
  monic, if and only if $g$ is monic if and only if $K$ is $\mathcal
  F$-Mittag-Leffler by \cite[Theorem 1]{goodearl}, if and only if $M$
  has $\mathcal M^{\mathcal F}$-projective dimension less than or equal
  to $1$.

  Now suppose that we have proved the claim for some nonzero natural
  number $n$. Then, looking at the exact sequence (\ref{eq:a}), we get
  that $\tor_{n+1}\left(R^{I_0},M\right)=0$ if and only if
  $\tor_{n}\left(R^{I_0},K\right)=0$, if and only if the $\mathcal
  M_\mathcal F$-resolution dimension of $K$ is less than or equal to $n$
  (by induction hypothesis), if and only if the $\mathcal M_{\mathcal
    F}$-resolution dimension of $M$ is less than or equal to $n+1$.

  (3) $\Leftrightarrow$ (4) $\Leftrightarrow$ (5). These are Corollary
  \ref{c:Mittag-LefflerDim}.
\end{proof}

When $n=0$ we obtain new characterizations of left coherent rings:

\begin{corollary}\label{c:coherent}
  The following assertions are equivalent:
  \begin{enumerate}
  \item $R$ is left coherent.

  \item For each natural number $m$, the direct product of any family
    of right $R$-modules with flat dimension less than or equal to $m$
    has flat dimension less than or equal to $m$.

  \item Each left $R$-module has $\mathcal M^{\mathcal F}$-projective
    dimension less than or equal to 1.

  \item Each left ideal of $R$ is $\mathcal F$-Mittag-Leffler.

  \item Each submodule of a $\mathcal F$-Mittag Leffler left
    $R$-module is $\mathcal F$-Mittag Leffler.
  \end{enumerate}
\end{corollary}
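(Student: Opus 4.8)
The plan is to anchor everything on the case $n=0$ of Theorem \ref{t:ProdsOfFiniteFlats}. Since left weak $0$-coherent rings are exactly left coherent rings, conditions (1) and (3) here are verbatim conditions (1) and (3) of that theorem with $n=0$, so (1) $\Leftrightarrow$ (3) is immediate. For (1) $\Leftrightarrow$ (2) I would lean on Proposition \ref{p:BoundedDimensionOfProducts}: re-reading its induction (1) $\Rightarrow$ (5) under weak $0$-coherence, the product $\prod_i P_i$ of the projective terms is again flat, so the flat dimension of a product of modules of flat dimension $\leq m$ stays at $m$, which is exactly (2); conversely, specializing (2) to $m=0$ says products of flats are flat, i.e.\ $R$ is weak $0$-coherent. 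It then remains to weave the three Mittag-Leffler conditions into a single cycle (3) $\Rightarrow$ (5) $\Rightarrow$ (4) $\Rightarrow$ (3).

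The two easy arcs are (5) $\Rightarrow$ (4) and (4) $\Rightarrow$ (3). For the first, $R$ is free, hence $\mathcal F$-Mittag-Leffler, and every left ideal is a submodule of $R$, so (5) forces every left ideal to be $\mathcal F$-Mittag-Leffler. For (4) $\Rightarrow$ (3) I would use that a module is $\mathcal F$-Mittag-Leffler exactly when it has $\mathcal M^{\mathcal F}$-projective dimension $\leq 0$; thus (4) says in particular that each finitely generated left ideal has $\mathcal M^{\mathcal F}$-projective dimension $\leq 0$, and Corollary \ref{c:Mittag-LefflerDim} (with $n=1$) then gives that $R$ has left global $\mathcal M^{\mathcal F}$-projective dimension $\leq 1$, which is (3).

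The substantial step, and the one I expect to be the main obstacle, is (3) $\Rightarrow$ (5): condition (3) literally controls only first syzygies, that is, submodules of projectives, and I must upgrade this to an arbitrary submodule $L$ of an arbitrary $\mathcal F$-Mittag-Leffler module $N$. Fix a set $I_0$ with $|I_0| \geq |R|$, so that by \cite{goodearl} a module $X$ is $\mathcal F$-Mittag-Leffler precisely when the canonical map $\phi_X \colon R^{I_0} \otimes X \to X^{I_0}$ is monic. Applying $R^{I_0} \otimes -$ (with its $\tor$ term) and the exact functor $(-)^{I_0}$ to $0 \to L \to N \to N/L \to 0$ and comparing them through the natural maps $\phi$, a diagram chase shows that $\ker \phi_L$ lies in the image of the connecting map $\tor_1(R^{I_0}, N/L) \to R^{I_0} \otimes L$: indeed $\phi_N$ is monic ($N$ being $\mathcal F$-Mittag-Leffler) and $L^{I_0} \hookrightarrow N^{I_0}$ is monic by exactness of $(-)^{I_0}$, so any $x \in \ker \phi_L$ must die already in $R^{I_0}\otimes N$. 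Hence $\phi_L$ is monic as soon as $\tor_1(R^{I_0}, N/L) = 0$. But under (3) the quotient $N/L$ has $\mathcal M^{\mathcal F}$-projective dimension $\leq 1$, and the $\tor$-computation carried out inside the proof of Theorem \ref{t:ProdsOfFiniteFlats} (its case $n=1$) says exactly that this is equivalent to $\tor_1(R^{I_0}, N/L) = 0$. Therefore $\phi_L$ is monic and $L$ is $\mathcal F$-Mittag-Leffler, closing the cycle. The delicate point is that the vanishing of the relevant $\mathrm{Tor}$ is forced by the quotient $N/L$ rather than by $L$ itself, so that applying the global dimension bound to $N/L$ is precisely what makes the chase go through.
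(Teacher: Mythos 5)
Your proposal is correct, and most of its skeleton coincides with the paper's: the equivalence (1) $\Leftrightarrow$ (2) via Chase's theorem and the induction of Proposition \ref{p:BoundedDimensionOfProducts}, the identification of (1) and (3) with the $n=0$ case of Theorem \ref{t:ProdsOfFiniteFlats}, and the passage from (4) to (3) through Corollary \ref{c:Mittag-LefflerDim} are all exactly what the paper does. Where you genuinely diverge is the key implication (3) $\Rightarrow$ (5). The paper argues at the level of module classes: given $K \leq M$ with $M$ an $\mathcal F$-Mittag-Leffler module, it pulls back a projective presentation $P \rightarrow M/K$ to obtain a module $Q$ sitting in an extension $0 \rightarrow K' \rightarrow Q \rightarrow M \rightarrow 0$ with $K'$ a first syzygy of $M/K$ (hence in $\mathcal M^{\mathcal F}$ by (3)) and in a split sequence $0 \rightarrow K \rightarrow Q \rightarrow P \rightarrow 0$; closure of $\mathcal M^{\mathcal F}$ under extensions and direct summands (\cite[Corollary 2.4]{Rothmaler}) then puts $K$ in $\mathcal M^{\mathcal F}$. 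You instead re-use the $\tor$-characterization established inside the proof of Theorem \ref{t:ProdsOfFiniteFlats} ($\mathcal M^{\mathcal F}$-projective dimension $\leq 1$ is equivalent to $\tor_1(R^{I_0},-)=0$, i.e.\ in effect to flatness of $R^{I_0}$) and run a diagram chase comparing $R^{I_0}\otimes -$ with $(-)^{I_0}$ on $0 \to L \to N \to N/L \to 0$; the chase correctly traps $\ker\phi_L$ in the image of the connecting map from $\tor_1(R^{I_0},N/L)$, which vanishes under (3). Both arguments are valid. The paper's is test-module-free and rests on Rothmaler's extension-closure result; yours avoids that external input but depends on the single-test-module form of Goodearl's theorem (which the paper also invokes) and has the merit of making explicit that the whole corollary pivots on the flatness of $R^{I_0}$, i.e.\ on condition (2) of Theorem \ref{t:ProdsOfFiniteFlats} with $n=0$.
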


\begin{proof}
  (1) $\Leftrightarrow$ (2). If $R$ is left coherent, direct products
  of flat right $R$-modules are flat by Chase's result, \cite[Theorem
  2.1]{chase}. Then, the same proof as (1) $\Rightarrow$ (3) in
  Proposition \ref{p:BoundedDimensionOfProducts} gives the equivalence
  of (1) and (2).

  The equivalences of (1), (3) and (4) follow from the
  previous theorem. (5) $\Rightarrow$ (3) is clear. It remains to
  prove that (3) implies (5). Let $M$ be a $\mathcal F$-Mittag-Leffler
  left $R$-module and $K \leq M$ a submodule. Take a projective
  presentation $f:P \rightarrow \frac{M}{K}$, and, making pullback, we
  can construct the following commutative diagram with exact rows and
  columns:
  \begin{displaymath}
    \setlength{\harrowlength}{20pt}
    \setlength{\varrowlength}{20pt}
    \commdiag{ &  &  &  & 0 & & 0 & & \cr
      & & & & \mapup & & \mapup & & \cr
      0 & \mapright & K & \mapright & M & \mapright &
      \frac{M}{K} & \mapright & 0\cr
      & & \mapup & & \mapup & &
      \mapup & &\cr
      0 & \mapright & K & \mapright & Q & \mapright &
      P & \mapright & 0\cr
      & & & & \mapup & & \mapup & & \cr
      & & & & K' & \mapright & K' & & \cr
      & & & & \mapup & & \mapup & & \cr
      & & & & 0 & & 0 & & }
  \end{displaymath}
  By (3), $K'$ belongs to $\mathcal M^{\mathcal F}$ and by \cite[Corollary 2.4]{Rothmaler}, $Q$ belongs to $\mathcal
  M_{\mathcal F}$ too. Since the middle row is split exact, $K$
  belongs to $\mathcal M^{\mathcal F}$ again by \cite[Corollary
  2.4]{Rothmaler} and the proof is finished.
\end{proof}

This result says that rings in which the class $\mathcal M^{\mathcal
  F}$ is closed under submodules, that is, rings with left global
$\mathcal M^{\mathcal F}$-projective dimension equal to one are,
precisely, left coherent rings. Recall that, as it can be easily
deduced from \cite[Theorem 1]{goodearl}, rings with left global
$\mathcal M^{\mathcal F}$-projective dimension zero are left
noetherian rings.

\begin{examples}
  \begin{enumerate}
  \item There exist left weak coherent rings which are not left
    coherent. For example, let $R$ be a non semihereditary ring with
    weak dimension 1 (such a ring is constructed in \cite[Example
    3.1.2]{glaz}). Then $R$ weak 1-coherent, but it is not left
    coherent by \cite[Theorem 3.1.3]{glaz}.

  \item There exist left weak coherent rings which have infinite weak
    global dimension. For instance, in \cite[Example
    7.7.2]{McconnellRobson} it is given a noetherian ring which has
    infinite global dimension. Since, for noetherian rings, the global
    and weak dimensions coincide, this ring has infinite weak
    dimension too.

  \item Right regular Gorenstein rings are left weak coherent. This is
    because, as it has been proved in \cite[Corollary
    VII.2.6]{BeligiannisReiten}, for these rings the class of all
    right $R$-modules with finite projective dimension coincides with
    the class of all modules with finite injective dimension. In
    particular, this means that the class of all right $R$-modules
    with finite projective dimension is closed under direct
    products. Then all direct products of copies of $R_R$ have finite
    flat dimension. By Theorem
    \ref{t:ProdsOfFiniteFlats}, $R$ is left weak coherent.
  \end{enumerate}
\end{examples}

Let us finish the paper with some applications of our results. In the
first of them we characterize when the class of all right $R$-modules with finite
flat dimension is closed under products. Recall that the right finitistic
flat dimension is the supremum of the set consisting of the flat
dimensions of all right $R$-modules with finite flat dimension, if this
supremum exists, and $\infty$ otherwise.

\begin{corollary}
  The following assertions are equivalent:
  \begin{enumerate}
  \item The class of all right $R$-modules with finite flat dimension
    is closed under direct products.
  \item $R$ is left
    weak coherent and has finite right finitistic flat dimension.
  \end{enumerate}
\end{corollary}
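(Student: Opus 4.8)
The plan is to prove both implications using the machinery already developed, after first recording the key identification: for $\mathcal X = \mathcal F_R$ the class of flat right $R$-modules, the $\mathcal X$-projective dimension of a right module coincides with its ordinary flat dimension. Indeed, $\mathcal F_R$ contains all projectives and is closed under direct summands and finite direct sums, and a right module has flat dimension $\leq n$ precisely when the $(n-1)$st syzygy of a projective resolution is flat, which is exactly the defining condition for $\mathcal F_R$-projective dimension $\leq n$. With this translation in hand, both the notion of left weak coherence (via Proposition \ref{p:BoundedDimensionOfProducts}) and Lemma \ref{l:FiniteGlobalDimension} become directly applicable.

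For the implication (2) $\Rightarrow$ (1), I would let $m$ denote the right finitistic flat dimension, which is finite by hypothesis. Then every right $R$-module of finite flat dimension has flat dimension at most $m$. Given any family $\{F_i : i \in I\}$ of right $R$-modules of finite flat dimension, each $F_i$ thus has flat dimension $\leq m$, so since $R$ is left weak coherent, condition (5) of Proposition \ref{p:BoundedDimensionOfProducts} guarantees that $\prod_{i \in I} F_i$ has finite flat dimension. Hence the class of right modules of finite flat dimension is closed under direct products.

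For the implication (1) $\Rightarrow$ (2), I would first observe that $R$ is left weak coherent: every flat right $R$-module has flat dimension zero, in particular finite, so by (1) every product of flat modules has finite flat dimension, which is condition (1) of Proposition \ref{p:BoundedDimensionOfProducts}. To obtain finiteness of the finitistic flat dimension, I would apply Lemma \ref{l:FiniteGlobalDimension} (in its evident right-module version, justified by the left-right symmetry of the whole setup) with $\mathcal X = \mathcal F_R$ and $\mathcal Y$ the class of all right $R$-modules of finite flat dimension. Here $\mathcal X$ contains all projectives and is closed under direct summands and finite direct sums, while $\mathcal Y$ is closed under countable direct products \emph{precisely} by the hypothesis (1). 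Since every module in $\mathcal Y$ has finite $\mathcal X$-projective dimension by the very definition of $\mathcal Y$, the lemma yields a single natural number $n$ bounding the $\mathcal X$-projective dimension, i.e. the flat dimension, of every module in $\mathcal Y$; this says exactly that the right finitistic flat dimension is at most $n$, hence finite.

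The only delicate point, which I regard as the main obstacle to a naive argument, is the decision to invoke Lemma \ref{l:FiniteGlobalDimension} through closure under countable direct \emph{products} rather than direct sums. The class $\mathcal Y$ of modules of finite flat dimension need not be closed under countable direct sums, since a sum of modules of unbounded finite flat dimensions can have infinite flat dimension; it is hypothesis (1) that supplies exactly the product-closure the lemma requires. Everything else is a routine translation between flat dimension and $\mathcal F_R$-projective dimension together with the equivalent formulations of left weak coherence collected in Proposition \ref{p:BoundedDimensionOfProducts}.
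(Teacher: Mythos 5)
Your proof is correct and follows essentially the same route as the paper: the implication (1) $\Rightarrow$ (2) is obtained from Lemma \ref{l:FiniteGlobalDimension} applied (in its right-module form) with $\mathcal X = \mathcal F_R$ and $\mathcal Y$ the class of modules of finite flat dimension, whose closure under countable products is exactly hypothesis (1), and (2) $\Rightarrow$ (1) is the routine use of the uniform bound coming from the finitistic dimension together with Proposition \ref{p:BoundedDimensionOfProducts}. Your spelled-out choice of $\mathcal Y$ is in fact the correct reading of the paper's terser citation of the lemma, so no discrepancy of substance arises.
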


\begin{proof}
  (1) $\Rightarrow$ (2). By Lemma \ref{l:FiniteGlobalDimension},
  taking $\mathcal X$ and $\mathcal Y$ the class of all flat modules.

 (2) $\Rightarrow$ (1). Trivial.
\end{proof}

Recall that a class of right $R$-modules is definable if it is closed
under products, pure submodules and direct limits. As a second
application of our results, we are going to prove that the class of
all right $R$-modules with flat dimension less than or equal to $n$ is
definable for every $n \in \omega$, when $R$ is a left coherent
ring. We shall use the following preliminary lemma that says that, in
order to see that a class is closed under direct limits, we only have
to prove that it is closed under continuous well ordered limits. The
proof can be done with the same argument used in \cite[Corollary
1.7]{AdamekRosicky}.

\begin{lemma}
  Let $\mathcal X$ be a class of right $R$-modules which is closed under
  continuous well ordered limits. Then $\mathcal X$ is closed under
  direct limits.
\end{lemma}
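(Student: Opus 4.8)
The plan is to imitate the template of \cite[Corollary 1.7]{AdamekRosicky}, reducing an arbitrary direct limit to a transfinite continuous chain and arguing by induction on the cardinality $\lambda = |I|$ of the directed index set $I$ of a direct system $(M_i,\tau_{ij})_{i,j \in I}$ whose terms $M_i$ all lie in $\mathcal X$. Since direct limits of modules are taken over directed sets, there is no loss in assuming $I$ is a directed poset, and the goal is to show $\varinjlim_{i \in I} M_i \in \mathcal X$.

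For the base of the induction, if $I$ is finite it has a greatest element $i_0$, so $\varinjlim_{i \in I} M_i = M_{i_0} \in \mathcal X$. If $I$ is countably infinite I would first extract a cofinal $\omega$-chain: enumerating $I = \{x_n : n < \omega\}$ and choosing recursively $y_{n+1}$ to be an upper bound of $\{y_n, x_{n+1}\}$ produces an increasing cofinal sequence $y_0 \le y_1 \le \cdots$, so that $\varinjlim_{i \in I} M_i = \varinjlim_{n < \omega} M_{y_n}$ is the limit of a well ordered chain (continuity at the only limit stage being vacuous) of objects of $\mathcal X$; hence it lies in $\mathcal X$ by hypothesis.

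For the inductive step with $\lambda = |I|$ uncountable, the key construction is a continuous increasing chain $\{I_\alpha : \alpha < \mu\}$ of \emph{directed} subsets of $I$ with $|I_\alpha| < \lambda$ and $I = \bigcup_{\alpha < \mu} I_\alpha$, where $\mu = \operatorname{cf}(\lambda)$. I would build it recursively: at successor stages I close the previously constructed set, together with the next block of a fixed enumeration of $I$, under chosen upper bounds of its finite subsets (this directed closure only raises the cardinality to its maximum with $\aleph_0$, which stays below $\lambda$), and at limit stages I take unions, using that an increasing union of directed sets is again directed. Setting $N_\alpha = \varinjlim_{i \in I_\alpha} M_i$, the induction hypothesis applies since $|I_\alpha| < \lambda$, giving $N_\alpha \in \mathcal X$; and since a colimit over $I$ equals the iterated colimit over the decomposition, the chain $(N_\alpha)_{\alpha < \mu}$ is continuous with $\varinjlim_{\alpha < \mu} N_\alpha = \varinjlim_{i \in I} M_i$. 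Closure of $\mathcal X$ under continuous well ordered limits, applied to this chain, then yields $\varinjlim_{i \in I} M_i \in \mathcal X$.

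The main obstacle is the cardinal-combinatorial core of the inductive step: arranging the decomposition $I = \bigcup_{\alpha < \mu} I_\alpha$ so that it is simultaneously continuous, exhaustive, and has every piece directed and of cardinality strictly below $\lambda$. The countable case is precisely where a naive ``chain of finite pieces'' fails, because the directed closure of a finite set can be countably infinite, which is why it is dispatched separately via the cofinal sequence; and for singular $\lambda$ one must index the chain by $\operatorname{cf}(\lambda)$ rather than by $\lambda$ in order to keep each piece below $\lambda$. Once the decomposition is secured, the continuity of $(N_\alpha)$ is a routine interchange of colimits and the conclusion is immediate from the hypothesis.
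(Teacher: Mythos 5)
Your proof is correct and is essentially the same argument the paper relies on: the paper gives no details beyond citing \cite[Corollary 1.7]{AdamekRosicky}, and your induction on $|I|$ (a cofinal $\omega$-chain when $I$ is countable, and a continuous decomposition of $I$ into directed subsets of cardinality $<|I|$ indexed by $\operatorname{cf}(|I|)$ when $I$ is uncountable, followed by an interchange of colimits) is precisely that argument, with the two delicate points --- the countable case and singular cardinals --- handled correctly. Nothing more is needed.
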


\begin{proposition}\label{p:DirectLimits}
  For each natural number $n$, the class of all right $R$-modules with
  flat dimension less than or equal to $n$ is closed
  under direct limits and pure submodules.
\end{proposition}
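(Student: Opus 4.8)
The plan is to prove the two closure properties separately, treating direct limits first since it is the more substantial of the two. For direct limits, I would invoke the preceding lemma and reduce to showing that the class $\mathcal{F}_n$ of right $R$-modules with flat dimension at most $n$ is closed under \emph{continuous well-ordered} direct limits. So let $(M_\alpha,\tau_{\alpha\beta})_{\alpha<\beta<\kappa}$ be a continuous well-ordered direct system with all $M_\alpha\in\mathcal{F}_n$ and direct limit $M=\varinjlim M_\alpha$. The natural idea is to bridge the gap between a \emph{filtration} (where successive quotients lie in $\mathcal{F}_n$, a situation governed by Proposition \ref{p:ClosedUnderFiltrations}) and a general continuous system of arbitrary (not necessarily monic) structural maps.

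The key step is to replace the given system by an equivalent filtration. Following the standard mapping-telescope/Eilenberg-swindle device, I would construct an auxiliary continuous chain whose union is $M$ and whose consecutive quotients are built from the $M_\alpha$. Concretely, one sets $N_\alpha = \bigoplus_{\gamma<\alpha}M_\gamma$ or uses the telescope of the $\tau_{\alpha,\alpha+1}$, arranging a filtration $0=L_0\subseteq L_1\subseteq\cdots$ of a module that is a direct summand of (or pure in, or flat-dimension-equivalent to) $M\oplus\bigoplus_\alpha M_\alpha$, with each quotient $L_{\alpha+1}/L_\alpha$ isomorphic to some $M_\alpha\in\mathcal{F}_n$. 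Since $\mathcal{F}_n=(\mathcal{F}_0)_n$ where $\mathcal{F}_0$ is the flat modules, and the flat modules are closed under filtrations, Proposition \ref{p:ClosedUnderFiltrations}(1) gives that $\mathcal{F}_n$ is closed under filtrations; hence the filtered module lies in $\mathcal{F}_n$. I would then recover $M\in\mathcal{F}_n$ by using that flat dimension is unaffected by adding a flat (indeed projective) summand and is inherited by direct summands.

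For the pure-submodule assertion I expect a cleaner homological argument. If $N$ is a pure submodule of $M\in\mathcal{F}_n$, then $0\to N\to M\to M/N\to 0$ is a pure-exact sequence. The standard fact is that purity forces the connecting maps in $\tor$ to behave well: for a pure exact sequence and any left module $L$, the induced sequence $0\to \tor_k(N,L)\to \tor_k(M,L)\to\cdots$ splits appropriately, so that $\tor_k$ of a pure submodule embeds into $\tor_k$ of the ambient module. Since $M$ has flat dimension at most $n$, we have $\tor_{n+1}(M,L)=0$ for all $L$, and purity then yields $\tor_{n+1}(N,L)=0$ for all $L$; by the $\tor$-characterization of flat dimension this gives $N\in\mathcal{F}_n$.

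The main obstacle is the direct-limit half, specifically making the telescope construction precise so that the successive quotients genuinely land in $\mathcal{F}_n$ while the resulting filtered module is provably flat-dimension-equivalent to $M$. The delicate point is that the structural maps $\tau_{\alpha,\alpha+1}$ need not be monomorphisms, so one cannot simply view $M$ as an increasing union; the telescope repairs this by replacing each stage with a mapping cylinder, at the cost of introducing extra flat summands that must be shown not to affect the flat dimension. Managing the bookkeeping at limit ordinals—ensuring continuity of the auxiliary chain—and confirming that the quotients are exactly the $M_\alpha$ (rather than some cone that might have larger flat dimension) is where the argument requires care.
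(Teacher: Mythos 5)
Your reduction to continuous well-ordered systems and your treatment of pure submodules are both sound; the latter is essentially the paper's own argument (the paper obtains the splitting of the long exact $\tor$-sequence of a pure exact sequence by writing it as a direct limit of split sequences and using that $\tor_{n+1}$ commutes with direct limits, which is the ``standard fact'' you cite). The gap is in the direct-limit half. The telescope does not deliver what your plan needs: for a continuous well-ordered system $(M_\alpha,\tau_{\alpha\beta})_{\alpha<\beta<\kappa}$ it produces a \emph{pure exact} sequence
\begin{displaymath}
0 \longrightarrow K \longrightarrow \bigoplus_{\alpha<\kappa} M_\alpha \longrightarrow \varinjlim M_\alpha \longrightarrow 0,
\end{displaymath}
i.e.\ it exhibits $M=\varinjlim M_\alpha$ as a pure \emph{epimorphic image} of a module filtered by the $M_\alpha$, not as a direct summand of one, nor as a (pure) submodule of one, nor does it make $M\oplus\bigoplus_\alpha M_\alpha$ itself $\{M_\alpha\}$-filtered. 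These distinctions are fatal: over $\mathbb{Z}$ the system $\mathbb{Z}\xrightarrow{\,2\,}\mathbb{Z}\xrightarrow{\,2\,}\cdots$ has limit $\mathbb{Z}[1/2]$; every module filtered by copies of $\mathbb{Z}$ is free, every subgroup of a free abelian group is free, and $\mathbb{Z}[1/2]$ is not free, so $M$ is neither a summand nor a submodule of any $\{M_\alpha\}$-filtered module, and $M\oplus\bigoplus_\alpha M_\alpha$ is not $\{M_\alpha\}$-filtered either. Thus the step ``recover $M$ via direct summands / extra flat summands'' cannot be carried out, and the hedge ``flat-dimension-equivalent to $M\oplus\bigoplus_\alpha M_\alpha$'' is precisely the assertion to be proved, not a tool. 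The repair is to prove a closure property you never state: $\mathcal F_n$ is closed under pure \emph{quotients}. That is true, and follows from the same $\tor$-computation as your pure-submodule argument (for a pure exact sequence, $\tor_{n+1}\bigl(\bigoplus_\alpha M_\alpha,C\bigr)\to\tor_{n+1}(M,C)$ is surjective); with it, the entire direct-limit half reduces to ``$M$ is a pure quotient of $\bigoplus_\alpha M_\alpha\in\mathcal F_n$,'' and the filtration machinery of Proposition \ref{p:ClosedUnderFiltrations} becomes unnecessary.

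For comparison, the paper's proof avoids purity here altogether: it inducts on $n$ and, by transfinite recursion, uses flat precovers and their factorization property to build a continuous direct system of short exact sequences $0\to F_\gamma\to G_\gamma\to M_\gamma\to 0$ lying over the given system, with $G_\gamma$ flat and $F_\gamma$ of flat dimension at most $n$; passing to the limit gives $0\to F\to G\to M\to 0$ with $G$ flat (the case $n=0$) and $F\in\mathcal F_n$ (induction hypothesis), whence $M\in\mathcal F_{n+1}$. Either route closes the argument, but as written your proof has a hole exactly at the point you yourself flag as the main obstacle.
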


\begin{proof}
  First of all, we shall prove that $\mathcal F_n$ is closed under
  direct limits. In view of the preceding result, we only have to see
  that $\mathcal F_n$ is closed under continuous well ordered direct
  limits. We shall make the proof inductively on $n$. Case $n=0$ is
  the well known property of flat modules. Assume that we have proved
  the result for some natural number $n$ and let us prove it for
  $n+1$. Let $(M_\alpha,\tau_{\alpha\beta})_{\alpha < \beta < \kappa}$
  be a continuous well ordered system of modules belonging to
  $\mathcal F_{n+1}$. We are going to construct, for each $\alpha <
  \kappa$, a direct system in $\mathbf{Ses}(\mathcal F_n,\mathcal F,\modr)$,
  \begin{displaymath}
    \big(0 \mapright F_\gamma \mapright^{f_\gamma} G_\gamma \mapright^{g_\gamma} M_\gamma
    \mapright 0, (\tau^1_{\delta \gamma}, \tau^2_{\delta,\gamma},
    \tau^3_{\delta\gamma})\big)_{\delta < \gamma \leq \alpha}
  \end{displaymath}
   with $\tau^3_{\gamma\alpha} = \tau_{\gamma\alpha}$ for each $\gamma \leq
  \alpha$. Then, the result will follow by taking direct limits, since
  the resulting exact sequence belongs to $\textbf{Ses}(\mathcal
  F_n,\mathcal F, \modr)$ and its last term is $M$ (note that flat dimension can be computed using flat resolutions).

  We shall make the construction by transfinite recursion on
  $\alpha$. If $\alpha = 0$ simply take a short exact sequence
  \begin{displaymath}
    0 \mapright F_0 \mapright^{f_0} G_0 \mapright^{g_0} M_0 \mapright 0
  \end{displaymath}
  with $G_0$ flat and $F_0 \in \mathcal F_n$. Now fix $\alpha > 0$ and
  assume that we have made the construction for each ordinal smaller
  than $\alpha$. If $\alpha$ is successor, say $\alpha = \gamma+1$,
  take a short exact sequence
  \begin{displaymath}
    0 \mapright F_\alpha \mapright^{f_\alpha} G_\alpha \mapright^{g_\alpha} M_\alpha \mapright 0
  \end{displaymath}
  with $g_\alpha$ a flat precover of $M_\alpha$. Then $F_\alpha$ has
  flat dimension less than or equal to $n$. Now, using
  the factorization property of the precover, we can construct the
  commutative diagram,
  \begin{displaymath}
    \commdiag{0 & \mapright & F_\gamma & \mapright^{f_\gamma} & G_\gamma &
      \mapright^{g_\gamma} & M_\gamma & \mapright & 0\cr
      & & \mapdown\lft{\tau^1_{\gamma\alpha}} & &
      \mapdown\lft{\tau^2_{\gamma\alpha}} & &
      \mapdown\rt{\tau_{\gamma\alpha}} & & \cr
      0 & \mapright & F_\alpha & \mapright^{f_\alpha} & G_\alpha &
      \mapright^{g_\alpha} & M_\alpha & \mapright & 0}
  \end{displaymath}
  Then if we set $\tau^3_{\gamma\alpha} = \tau_{\gamma\alpha}$ and
  $\tau^i_{\delta \alpha} =
  \tau^i_{\gamma\alpha}\tau^i_{\delta\gamma}$ for each $i \in
  \{1,2,3\}$ and $\delta < \gamma$, we will have successor case done. If $\alpha$ is limit the construction is made by taking
  direct limits.

  Now we prove that $\mathcal F_n$ is closed under pure submodules. Take
  a pure short exact sequence
  \begin{equation}\label{eq:1}
    0 \mapright K \mapright M \mapright L \mapright 0
  \end{equation}
  with $M \in \mathcal F_{n}$. Then there exists a direct system
  of splitting exact sequences,
  \begin{displaymath}
    \big(0 \mapright K_i \mapright^{f_i} M_i \mapright^{g_i} L_i
    \mapright 0, (\tau^1_{ij}, \tau^2_{ij},
    \tau^3_{ij})\big)_{i < i \in I},
  \end{displaymath}
  whose direct limit is (\ref{eq:1}). For any module $C$, if we apply
  $\tor_{n+1}(\_,C)$ to this system, we get another direct system of
  splitting short exact sequences whose limit is 
  \begin{displaymath}
    \setlength{\harrowlength}{20pt}
    0 \mapright \lim_{\longrightarrow}\tor_{n+1}(K_i,C) \mapright
    \lim_{\longrightarrow}\tor_{n+1}(M_i,C) \mapright \lim_{\longrightarrow}\tor_{n+1}(L_i,C) \mapright 0.
  \end{displaymath}
  Now, using that $\tor_{n+1}$ commutes with direct limits (see, for
  example, \cite[Proposition 7.8]{Rotman}), we actually get the short
  exact sequence
  \begin{displaymath}
    0 \mapright \tor_{n+1}(K,C) \mapright \tor_{n+1}(M,C) \mapright
    \tor_{n+1}(L,C) \mapright 0
  \end{displaymath}
  Finally, since $M$ has flat dimension less than or equal to $n$, we
  conclude that $\tor_{n+1}(M,C) = 0$ and, consequently,
  $\tor_{n+1}(K,C) = 0$. Since $C$
  was arbitrary, $K$ has flat dimension less than
  or equal to $n$ and the proof is finished.
\end{proof}

Note that as a consequence of \cite[Corollary 2.36]{AdamekRosicky},
this result says that, for any ring and $n \in \omega$, the full
subcategory of $\modr$ whose class of objects consists of all modules
with flat dimension less than or equal to $n$ is an accessible category.

\begin{corollary}\label{c:Definable}
  Let $n$ be a natural number and suppose that $R$ is left coherent. Then the
  class of all right $R$-modules with flat dimension less than or equal
  to $n$ is definable.
\end{corollary}

\begin{proof}
  If $R$ is left coherent then the class $\mathcal F_n$ is closed
  under direct products (this can be deduced with the same argument
  used in Proposition \ref{p:BoundedDimensionOfProducts}). Then the
  result follows from the previous proposition.
\end{proof}

\begin{bibdiv}
  \begin{biblist}
    \bib{AdamekRosicky}{book}{
   author={Ad{\'a}mek, Ji{\v{r}}{\'{\i}}},
   author={Rosick{\'y}, Ji{\v{r}}{\'{\i}}},
   title={Locally presentable and accessible categories},
   series={London Mathematical Society Lecture Note Series},
   volume={189},
   publisher={Cambridge University Press, Cambridge},
   date={1994},
   pages={xiv+316},
   isbn={0-521-42261-2},
   review={\MR{1294136 (95j:18001)}},
   doi={10.1017/CBO9780511600579},
}

\bib{AngeleriHerbera}{article}{
   author={Angeleri H{\"u}gel, Lidia},
   author={Herbera, Dolors},
   title={Mittag-Leffler conditions on modules},
   journal={Indiana Univ. Math. J.},
   volume={57},
   date={2008},
   number={5},
   pages={2459--2517},
   issn={0022-2518},
   review={\MR{2463975 (2009i:16002)}},
   doi={10.1512/iumj.2008.57.3325},
}

\bib{Auslander}{article}{
   author={Auslander, Maurice},
   title={On the dimension of modules and algebras. III. Global dimension},
   journal={Nagoya Math. J.},
   volume={9},
   date={1955},
   pages={67--77},
   issn={0027-7630},
   review={\MR{0074406 (17,579a)}},
}

\bib{AzumayaFacchini}{article}{
   author={Azumaya, Goro},
   author={Facchini, Alberto},
   title={Rings of pure global dimension zero and Mittag-Leffler modules},
   journal={J. Pure Appl. Algebra},
   volume={62},
   date={1989},
   number={2},
   pages={109--122},
   issn={0022-4049},
   review={\MR{1027751 (91d:16002)}},
   doi={10.1016/0022-4049(89)90146-1},
}

\bib{BeligiannisReiten}{article}{
   author={Beligiannis, Apostolos},
   author={Reiten, Idun},
   title={Homological and homotopical aspects of torsion theories},
   journal={Mem. Amer. Math. Soc.},
   volume={188},
   date={2007},
   number={883},
   pages={viii+207},
   issn={0065-9266},
   review={\MR{2327478 (2009e:18026)}},
}

\bib{BennisMahdou}{article}{
   author={Bennis, Driss},
   author={Mahdou, Najib},
   title={Global Gorenstein dimensions},
   journal={Proc. Amer. Math. Soc.},
   volume={138},
   date={2010},
   number={2},
   pages={461--465},
   issn={0002-9939},
   review={\MR{2557164 (2011b:16024)}},
   doi={10.1090/S0002-9939-09-10099-0},
}

\bib{chase}{article}{
   author={Chase, Stephen U.},
   title={Direct products of modules},
   journal={Trans. Amer. Math. Soc.},
   volume={97},
   date={1960},
   pages={457--473},
   issn={0002-9947},
   review={\MR{0120260 (22 \#11017)}},
}

\bib{CortesEnochsTorrecillas}{article}{
   author={Enochs, Edgar E.},
   author={Cort{\'e}s-Izurdiaga, Manuel},
   author={Torrecillas, Blas},
   title={Gorenstein conditions over triangular matrix rings},
   journal={J. Pure Appl. Algebra},
   volume={218},
   date={2014},
   number={8},
   pages={1544--1554},
   issn={0022-4049},
   review={\MR{3175039}},
   doi={10.1016/j.jpaa.2013.12.006},
}

\bib{EnochsEstradaGarciaRozas}{article}{
   author={Enochs, E.},
   author={Estrada, S.},
   author={Garc{\'{\i}}a-Rozas, J. R.},
   title={Gorenstein categories and Tate cohomology on projective schemes},
   journal={Math. Nachr.},
   volume={281},
   date={2008},
   number={4},
   pages={525--540},
   issn={0025-584X},
   review={\MR{2404296 (2009j:14023)}},
   doi={10.1002/mana.200510622},
}

\bib{EnochsEstradaIacob}{article}{
   author={Enochs, E.},
   author={Estrada, S.},
   author={Iacob, A.},
   title={Rings with finite Gorenstein global dimension},
   journal={Math. Scand.},
   volume={102},
   date={2008},
   number={1},
   pages={45--58},
   issn={0025-5521},
   review={\MR{2420678 (2009b:16015)}},
}

\bib{EnochsIacobJenda}{article}{
   author={Enochs, Edgar E.},
   author={Iacob, Alina},
   author={Jenda, Overtoun M. G.},
   title={Closure under transfinite extensions},
   journal={Illinois J. Math.},
   volume={51},
   date={2007},
   number={2},
   pages={561--569},
   issn={0019-2082},
   review={\MR{2342674 (2009b:18016)}},
}

\bib{EnochsJenda}{book}{
   author={Enochs, Edgar E.},
   author={Jenda, Overtoun M. G.},
   title={Relative homological algebra. Volume 1},
   series={de Gruyter Expositions in Mathematics},
   volume={30},
   edition={Second revised and extended edition},
   publisher={Walter de Gruyter GmbH \& Co. KG, Berlin},
   date={2011},
   pages={xiv+359},
   isbn={978-3-11-021520-5},
   review={\MR{2857612 (2012g:16029)}},
}

\bib{EstradaGuilIzurdiaga}{article}{
   author={Estrada, S.},
   author={Guil Asensio, Pedro A.},
   author={Izurdiaga, Manuel C.},
   title={A model structure approach to the finitistic dimension conjectures},
   journal={Math. Nachr.},
   volume={285},
   date={2012},
   number={7},
   pages={821-833},
   issn={0025-584X},
   doi={10.1002/mana.201000116},
}

\bib{glaz}{article}{
   author={Glaz, Sarah},
   title={Pr\"ufer conditions in rings with zero-divisors},
   conference={
      title={Arithmetical properties of commutative rings and monoids},
   },
   book={
      series={Lect. Notes Pure Appl. Math.},
      volume={241},
      publisher={Chapman \& Hall/CRC, Boca Raton, FL},
   },
   date={2005},
   pages={272--281},
   review={\MR{2140700 (2006b:13041)}},
   doi={10.1201/9781420028249.ch17},
}

\bib{GobelTrlifaj}{book}{
   author={G{\"o}bel, R{\"u}diger},
   author={Trlifaj, Jan},
   title={Approximations and endomorphism algebras of modules},
   series={de Gruyter Expositions in Mathematics},
   volume={41},
   publisher={Walter de Gruyter GmbH \& Co. KG, Berlin},
   date={2006},
   pages={xxiv+640},
   isbn={978-3-11-011079-1},
   isbn={3-11-011079-2},
   review={\MR{2251271 (2007m:16007)}},
   doi={10.1515/9783110199727},
}

\bib{goodearl}{article}{
   author={Goodearl, K. R.},
   title={Distributing tensor product over direct product},
   journal={Pacific J. Math.},
   volume={43},
   date={1972},
   pages={107--110},
   issn={0030-8730},
   review={\MR{0311714 (47 \#276)}},
}

\bib{glaz}{article}{
   author={Glaz, Sarah},
   title={Pr\"ufer conditions in rings with zero-divisors},
   conference={
      title={Arithmetical properties of commutative rings and monoids},
   },
   book={
      series={Lect. Notes Pure Appl. Math.},
      volume={241},
      publisher={Chapman \& Hall/CRC, Boca Raton, FL},
   },
   date={2005},
   pages={272--281},
   review={\MR{2140700 (2006b:13041)}},
   doi={10.1201/9781420028249.ch17},
}

\bib{HerberaTrlifaj}{article}{
   author={Herbera, Dolors},
   author={Trlifaj, Jan},
   title={Almost free modules and Mittag-Leffler conditions},
   journal={Adv. Math.},
   volume={229},
   date={2012},
   number={6},
   pages={3436--3467},
   issn={0001-8708},
   review={\MR{2900444}},
   doi={10.1016/j.aim.2012.02.013},
}

\bib{McconnellRobson}{book}{
   author={McConnell, J. C.},
   author={Robson, J. C.},
   title={Noncommutative Noetherian rings},
   series={Graduate Studies in Mathematics},
   volume={30},
   edition={Revised edition},
   note={With the cooperation of L. W. Small},
   publisher={American Mathematical Society},
   place={Providence, RI},
   date={2001},
   pages={xx+636},
   isbn={0-8218-2169-5},
   review={\MR{1811901 (2001i:16039)}},
}

\bib{RaynaudGruson}{article}{
   author={Raynaud, Michel},
   author={Gruson, Laurent},
   title={Crit\`eres de platitude et de projectivit\'e. Techniques de
   ``platification'' d'un module},
   language={French},
   journal={Invent. Math.},
   volume={13},
   date={1971},
   pages={1--89},
   issn={0020-9910},
   review={\MR{0308104 (46 \#7219)}},
}

\bib{Rothmaler}{book}{
   author={Rothmaler, Ph.},
   title={Mittag-Leffler Modules and Positive Atomicity},
   series={Habilitantionsschrift},
   edition={},
   publisher={},
   place={Kiel},
   date={1994},
   pages={},
   isbn={},
   review={},
   doi={},
}

\bib{Rotman}{book}{
   author={Rotman, Joseph J.},
   title={An introduction to homological algebra},
   series={Universitext},
   edition={2},
   publisher={Springer},
   place={New York},
   date={2009},
   pages={xiv+709},
   isbn={978-0-387-24527-0},
   review={\MR{2455920 (2009i:18011)}},
   doi={10.1007/b98977},
}

\bib{Stovicek}{article}{
   author={{\v{S}}t'ov{\'{\i}}{\v{c}}ek, Jan},
   title={Deconstructibility and the Hill lemma in Grothendieck categories},
   journal={Forum Math.},
   volume={25},
   date={2013},
   number={1},
   pages={193--219},
   issn={0933-7741},
   review={\MR{3010854}},
}
  \end{biblist}
\end{bibdiv}
 
\end{document}